\title[]{The inclusion relations of the countable models of set theory are all isomorphic}
\author[Hamkins]{Joel David Hamkins}
 \address[J.~D.~Hamkins]
         {Mathematics, Philosophy, Computer Science, The Graduate Center of The City University of New York,
         365 Fifth Avenue, New York, NY 10016 \&
         Mathematics, College of Staten Island of CUNY, Staten Island, NY 10314}
 \email{jhamkins@gc.cuny.edu}
 \urladdr{http://jdh.hamkins.org}
 \thanks{This work was supported by JSPS KAKENHI Grant Number 17H02263. The research project builds on our earlier paper~\cite{HamkinsKikuchi2016:Set-theoreticMereology}. This segment of the work began in Kyoto at the conference \emph{Mathematical Logic and its Applications}, organized by the second author and held at the Research Institute for Mathematical Sciences at Kyoto University in September 2016. The first author is grateful for the support of his participation there. Commentary concerning this paper can be made at \href{http://jdh.hamkins.org/inclusion-relations-are-all-isomorphic}{jdh.hamkins.org/inclusion-relations-are-all-isomorphic}.}
\author[Kikuchi]{Makoto Kikuchi}
 \address[M.~Kikuchi]
         {Graduate School of System Informatics, Kobe University, Rokkodai, Nada, Kobe 657-8501, Japan}
 \email{mkikuchi@kobe-u.ac.jp}
 \urladdr{http://www2.kobe-u.ac.jp/~mkikuchi/index-e.html}
\newtheorem{theorem}{Theorem}
\newtheorem*{maintheorem*}{Main Theorem}
\newtheorem*{maintheorems*}{Main Theorems}
\newtheorem{corollary}[theorem]{Corollary}
\newtheorem*{corollary*}{Corollary}
\newtheorem*{corollaries*}{Corollaries}
\newtheorem{lemma}[theorem]{Lemma}
\newtheorem{question}[theorem]{Question}
\newtheorem*{question*}{Question}
\newtheorem*{questions*}{Questions}
\newtheorem*{mainquestion*}{Main Question} 
\newtheorem*{openquestion*}{Open Question} 
\newtheorem{observation}[theorem]{Observation}
\newcommand{\QED}{\end{proof}}
\def\proclaim[#1]{{\bf #1}}
\def\BF#1.{{\bf #1.}}
\newcommand\Ersov{Er\v sov}
\newcommand{\Godel}{G\"odel}
\newcommand{\of}{\subseteq}
\newcommand{\sqof}{\sqsubseteq}
\newcommand{\set}[1]{\{\,{#1}\,\}}
\newcommand{\singleton}[1]{\left\{{#1}\right\}}
\newcommand{\restrict}{\upharpoonright} 
\newcommand{\satisfies}{\models}
\newcommand{\concat}{\mathbin{{}^\smallfrown}}
\newcommand{\union}{\cup}
\newcommand{\intersect}{\cap}
\newcommand{\smalllt}{\mathrel{\mathchoice{\raise2pt\hbox{$\scriptstyle<$}}{\raise1pt\hbox{$\scriptstyle<$}}{\raise0pt\hbox{$\scriptscriptstyle<$}}{\scriptscriptstyle<}}}
\newcommand{\smallleq}{\mathrel{\mathchoice{\raise2pt\hbox{$\scriptstyle\leq$}}{\raise1pt\hbox{$\scriptstyle\leq$}}{\raise1pt\hbox{$\scriptscriptstyle\leq$}}{\scriptscriptstyle\leq}}}
\newcommand{\boolval}[1]{\mathopen{\lbrack\!\lbrack}\,#1\,\mathclose{\rbrack\!\rbrack}}
\def\[#1]{\boolval{#1}}
\newbox\gnBoxA
\newdimen\gnCornerHgt
\newdimen\gnArgHgt
\def\gcode #1{%
\setbox\gnBoxA=\hbox{$#1$}%
\gnArgHgt=\ht\gnBoxA%
\ifnum     \gnArgHgt<\gnCornerHgt \gnArgHgt=0pt%
\else \advance \gnArgHgt by -\gnCornerHgt%
\fi \raise\gnArgHgt\hbox{\tiny$\ulcorner$} \box\gnBoxA %
\raise\gnArgHgt\hbox{\tiny$\urcorner$}}
\newcommand{\UnderTilde}[1]{{\setbox1=\hbox{$#1$}\baselineskip=0pt\vtop{\hbox{$#1$}\hbox to\wd1{\hfil$\sim$\hfil}}}{}}
\newcommand{\Undertilde}[1]{{\setbox1=\hbox{$#1$}\baselineskip=0pt\vtop{\hbox{$#1$}\hbox to\wd1{\hfil$\scriptstyle\sim$\hfil}}}{}}
\newcommand{\undertilde}[1]{{\setbox1=\hbox{$#1$}\baselineskip=0pt\vtop{\hbox{$#1$}\hbox to\wd1{\hfil$\scriptscriptstyle\sim$\hfil}}}{}}
\newcommand{\UnderdTilde}[1]{{\setbox1=\hbox{$#1$}\baselineskip=0pt\vtop{\hbox{$#1$}\hbox to\wd1{\hfil$\approx$\hfil}}}{}}
\newcommand{\Underdtilde}[1]{{\setbox1=\hbox{$#1$}\baselineskip=0pt\vtop{\hbox{$#1$}\hbox to\wd1{\hfil\scriptsize$\approx$\hfil}}}{}}
\def\<#1>{\left\langle#1\right\rangle}
\newcommand{\ZFC}{{\rm ZFC}}
\newcommand{\ZF}{{\rm ZF}}
\newcommand\ZFfin{\ZF^{\neg\infty}}
\newcommand{\KM}{{\rm KM}}
\newcommand{\GBC}{{\rm GBC}}
\newcommand{\CH}{{\rm CH}}
\newcommand{\KP}{{\rm KP}}
\newcommand{\HF}{{\rm HF}}
\newcommand{\PA}{{\rm PA}}
\newcommand{\cell}[1]{\boxit{\hbox to 17pt{\strut\hfil$#1$\hfil}}}
\newcommand{\head}[2]{\lower2pt\vbox{\hbox{\strut\footnotesize\it\hskip3pt#2}\boxit{\cell#1}}}
\newcommand{\boxit}[1]{\setbox4=\hbox{\kern2pt#1\kern2pt}\hbox{\vrule\vbox{\hrule\kern2pt\box4\kern2pt\hrule}\vrule}}
\newcommand{\Col}[3]{\hbox{\vbox{\baselineskip=0pt\parskip=0pt\cell#1\cell#2\cell#3}}}
\newcommand{\tapenames}{\raise 5pt\vbox to .7in{\hbox to .8in{\it\hfill input: \strut}\vfill\hbox to
.8in{\it\hfill scratch: \strut}\vfill\hbox to .8in{\it\hfill output: \strut}}}
\newcommand{\Head}[4]{\lower2pt\vbox{\hbox to25pt{\strut\footnotesize\it\hfill#4\hfill}\boxit{\Col#1#2#3}}}
\newcommand{\Dots}{\raise 5pt\vbox to .7in{\hbox{\ $\cdots$\strut}\vfill\hbox{\ $\cdots$\strut}\vfill\hbox{\
$\cdots$\strut}}}
\begin{document}

\begin{abstract}
 The structures $\<M,\of^M>$ arising as the inclusion relation of a countable model of sufficient set theory $\<M,\in^M>$, whether well-founded or not, are all isomorphic. These structures $\<M,\of^M>$ are exactly the countable saturated models of the theory of set-theoretic mereology: an unbounded atomic relatively complemented distributive lattice. A very weak set theory suffices, even finite set theory, provided that one excludes the $\omega$-standard models with no infinite sets and the $\omega$-standard models of set theory with an amorphous set. Analogous results hold also for class theories such as \Godel-Bernays set theory and Kelley-Morse set theory.
\end{abstract}

\maketitle

\section{Introduction}

\noindent
Set-theoretic mereology is the study of the inclusion relation $\of$ as it arises within set theory. In any set-theoretic context, with the set membership relation $\in$, one may define the corresponding inclusion relation $\of$ and investigate its properties. Thus, every model of set theory $\<M,\in^M>$ gives rise to a corresponding model of set-theoretic mereology $\<M,\of^M>$, the reduct to the inclusion relation.

In our previous article~\cite{HamkinsKikuchi2016:Set-theoreticMereology}, we identified exactly the complete theory of these mereological structures $\<M,\of^M>$. Namely, if $\<M,\in^M>$ is a model of set theory, even for extremely weak theories, including set theory without the infinity axiom, then the corresponding mereological reduct $\<M,\of^M>$ is an unbounded atomic relatively complemented distributive lattice. We call this the theory of set-theoretic mereology. By a quantifier-elimination argument that we give in~\cite{HamkinsKikuchi2016:Set-theoreticMereology}, partaking of Tarski's Boolean-algebra invariants and \Ersov's work on lattices, this theory is complete. 

After that work, we found it natural to inquire:

\begin{question}\label{Question.Which-models-arise?}
 Which models of set-theoretic mereology arise as the inclusion relation $\of$ of a model of set theory?
\end{question}

More precisely, given a model $\<M,\sqof>$ of set-theoretic mereology, under what circumstances can we place a binary relation $\in^M$ on $M$ in such a way that $\<M,\in^M>$ is a model of set theory and the inclusion relation $\of$ defined in $\<M,\in^M>$ is precisely the given relation $\sqof$? One can view this question as seeking a kind of Stone-style representation of the mereological structure $\<M,\sqof>$, because such a model $M$ would provide a representation of $\<M,\sqof>$ as a relative field of sets via the model of set theory $\<M,\in^M>$.

A second natural question was to wonder how much of the theory of the original model of set theory can be recovered from the mereological reduct.

\begin{question}\label{Question.What-does-inclusion-know?}
 If $\<M,\of^M>$ is the model of set-theoretic mereology arising as the inclusion relation $\of$ of a model of set theory $\<M,\in^M>$, what part of the theory of $\<M,\in^M>$ is determined by the structure $\<M,\of^M>$?
\end{question}

In the case of the countable models of \ZFC, these questions are completely answered by our main theorems. 

\begin{maintheorems*}\
 \begin{enumerate}
   \item All countable models of set theory $\<M,\in^M>\satisfies\ZFC$ have isomorphic reducts $\<M,\of^M>$ to the inclusion relation.
   \item The same holds for models of considerably weaker theories such as $\KP$ and even finite set theory $\ZFfin$, provided one excludes the $\omega$-standard models without infinite sets and the $\omega$-standard models having an amorphous set.
   \item These inclusion reducts $\<M,\of^M>$ are precisely the countable saturated models of set-theoretic mereology.
   \item Similar results hold for class theory: all countable models of \Godel-Bernays set theory have isomorphic reducts to the inclusion relation, and this reduct is precisely the countably infinite saturated atomic Boolean algebra.
 \end{enumerate}
\end{maintheorems*}

Specifically, in theorem~\ref{Theorem.Omega-saturated} we show that the mereological reducts $\<M,\of^M>$ of the models of sufficient set theory are always $\omega$-saturated, and from this it follows on general model-theoretic grounds (corollary~\ref{Corollary.All-isomorphic}) that they are all isomorphic, establishing statements (1) and (2). So a countable model $\<M,\sqof>$ of set-theoretic mereology arises as the inclusion relation of a model of sufficient set theory if and only if it is $\omega$-saturated (corollary~\ref{Corollary.Which-models-arise}), establishing (3) and answering question~\ref{Question.Which-models-arise?}. Consequently, in addition, the mereological reducts $\<M,\of^M>$ of the countable models of sufficient set theory know essentially nothing of the theory of the structure $\<M,\in^M>$ from which they arose, since $\<M,\of^M>$ arises equally as the inclusion relation of other models $\<M,\in^*>$ with any desired sufficient alternative set theory (corollary~\ref{Corollary.Alternative-theory}), a fact which answers question~\ref{Question.What-does-inclusion-know?}. Our analysis works with very weak set theories, even finite set theory $\ZFfin$, provided one excludes the $\omega$-standard models with no infinite sets and the $\omega$-standard models with an amorphous set, since the inclusion reducts of these models are not $\omega$-saturated. In section~\ref{Section.Uncountable-models} we prove that most of these results do not generalize to uncountable models, nor even to the $\omega_1$-like models, although theorem~\ref{Theorem.Saturated-models-are-realized} shows that every saturated model of set-theoretic mereology is realized as the inclusion relation of a model of any desired consistent set theory.

Our results have some affinity with the classical results in models of arithmetic concerned with the additive reducts of models of \PA. Restricting a model of set theory to the inclusion relation $\of$ is, after all, something like restricting a model of arithmetic to its additive part. Lipshitz and Nadel~\cite{LipshitzNadel1978:The-additive-structure-of-models-of-arithmetic} proved that a countable model of Presburger arithmetic (with $+$ only) can be expanded to a model of \PA\ if and only if it is computably saturated. We had hoped at first to prove a corresponding result for the mereological reducts of the models of set theory. In arithmetic, the additive reducts are not all isomorphic, since the standard system of the \PA\ model is fully captured by the additive reduct. Our main result for the countable models of set theory, however, turned out to be stronger than we had expected, since the inclusion reducts are not merely computably saturated, but fully $\omega$-saturated, and this is why they are all isomorphic. Meanwhile, Lipshitz and Nadel point out that their result does not generalize to uncountable models of arithmetic, and similarly ours also does not generalize to uncountable models of set theory (see section~\ref{Section.Uncountable-models}). Another instance of the general phenomenon is known for real-closed fields, since results in~\cite{DAquinoKnightStarchenko2010:Real-closed-fields-and-models-of-PA,DAquinoKnightStarchenko2010:Corrigendum-to-Real-closed-fields-and-models-of-PA} show that a countable real closed field has an integer part that is a model of \PA\ just in case it is either Archimedean or computably saturated.

\break
\section{Expressive power of types in set-theoretic mereology}

Let us begin our analysis by observing that every model of set-theoretic mereology $\<M,\sqof>$ can be represented as a relative field of sets, that is, a collection of sets closed under intersection, union and relative complement. This can be seen simply by identifying every object in $M$ with the set of atoms below it, since one may readily verify that this representation respects the lattice structure of $\<M,\sqof>$. Therefore, allow us freely to use a set-theoretic terminology and notation in set-theoretic mereology, referring to the lattice operations as union, intersection and relative complement.

We shall now clarify the exact expressive power of types in set-theoretic mereology.

\begin{lemma}\label{Lemma.Types}
 If $p(a_1,\ldots,a_n)$ is a complete $n$-type in the language of set-theoretic mereology, then $p(a_1,\ldots,a_n)$ is equivalent over the theory of set-theoretic mereology to the assertions stating for each cell in the Venn diagram of the variables that it has some specific finite size or that it is infinite.
$$\begin{tikzpicture}[xscale=1.2,every node/.style={scale=.7}]
\draw[fill=yellow, fill opacity=.2,label={[opacity=1]above:$x_1$}] (30:.5) circle (1);
\draw[fill=blue, fill opacity=.2] (150:.5) circle (1);
\draw[fill=red, fill opacity=.2] (-90:.5) circle (1);
\draw (0,0) node {$5$};
\draw (30:1.1) node {$2$};
\draw (150:1.1) node {$\infty$};
\draw (-90:1.1) node {$17$};
\draw (90:.75) node {$3$};
\draw (-30:.75) node {$\infty$};
\draw (210:.75) node {$0$};
\draw (135:1.65) node {$a$};
\draw (45:1.65) node {$b$};
\draw (-60:1.65) node {$c$};
\draw (200:2) node {$(\infty)$};
\end{tikzpicture}
\qquad\qquad
\raise 1.5cm\hbox{\resizebox{!}{1.3cm}{
   $\begin{split}
      |a-(b\union c)|  &=\infty  \\
      |(a\intersect b)-c|  &=3  \\
      |b-(a\union c)|  &=2  \\
      |(a\intersect c)-b|  &=0  \\
      |a\intersect b\intersect c| &=5 \\
      |(b\intersect c)-a|  &=\infty\\
      |c-(a\union b)|  &=17  \\
   \end{split}$
}}
$$
\end{lemma}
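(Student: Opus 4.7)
The forward direction---that a complete type determines the cell cardinalities---is essentially immediate. Since $\intersect$, $\union$, and relative complement are all definable from $\sqof$ in the lattice language, each non-exterior Venn cell $c_S := \bigcap_{i\in S} a_i \intersect \bigcap_{i\notin S}\bigl((a_1\union\cdots\union a_n)\minus a_i\bigr)$ (for $\emptyset\neq S \subseteq\{1,\ldots,n\}$) and the exterior cell $c_\emptyset$ consisting of the elements disjoint from $a_1\union\cdots\union a_n$ are term-definable from $(a_1,\ldots,a_n)$. The assertion ``$c_S$ has exactly $k$ atoms below it'' is a first-order formula for each fixed $k$, and ``$c_S$ is infinite'' is the schematic negation of all such finite statements. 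Hence the cardinality profile is fully determined by the complete type $p(\bar a)$, with the proviso that $c_\emptyset$ is always infinite in an unbounded lattice and so conveys no information beyond ``$\infty$''.

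For the converse direction my plan is a back-and-forth argument on Venn cells, parallel to the quantifier elimination developed in \cite{HamkinsKikuchi2016:Set-theoreticMereology}. Suppose $\bar a\in \mathcal{M}$ and $\bar b\in \mathcal{N}$ are tuples from models of set-theoretic mereology with matching Venn-cell cardinality profiles. To show they realize the same complete type, it suffices, given any additional $x\in \mathcal{M}$, to produce $y\in \mathcal{N}$ such that $(\bar a, x)$ and $(\bar b, y)$ again have matching profiles. Since the cells of $(\bar a, x)$ refine those of $\bar a$---each cell $c_S^{\mathcal{M}}$ splitting as $(c_S^{\mathcal{M}}\intersect x)\squnion(c_S^{\mathcal{M}}\minus x)$---it suffices to choose for each $S$ an element $d_S \sqof c_S^{\mathcal{N}}$ satisfying $|d_S| = |c_S^{\mathcal{M}}\intersect x|$ and $|c_S^{\mathcal{N}}\minus d_S| = |c_S^{\mathcal{M}}\minus x|$, and then to set $y = \bigsqcup_S d_S$.

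The main obstacle is realizing these splittings inside infinite cells. For finite cells the choice is trivial---pick the required number of atoms. For infinite cells we need both finite-plus-infinite splits (easy: pick finitely many atoms of $c_S^{\mathcal{N}}$ and take the relative complement inside $c_S^{\mathcal{N}}$) and, crucially, infinite-plus-infinite splits. The latter is the classical fact that every infinite collection of atoms can be partitioned into two infinite sub-collections, a property that fails only for amorphous sets and that is built into the complete theory of set-theoretic mereology via the Ersov--Tarski invariant analysis underlying the completeness proof from \cite{HamkinsKikuchi2016:Set-theoreticMereology}. Once this splitting property is available, the back-and-forth closes: any two tuples with matching Venn-cell cardinalities satisfy exactly the same formulas, so the cardinality data determine the complete type, yielding the equivalence claimed in the lemma.
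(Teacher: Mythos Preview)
Your forward direction is fine, but the converse via back-and-forth has a genuine gap. You claim that the infinite-plus-infinite splitting of a cell is ``built into the complete theory of set-theoretic mereology,'' but this is false: the theory has models in which some infinite element cannot be split into two infinite pieces. The paper itself constructs such models explicitly (the models of finite ``characteristic $n$'' built just after observation~\ref{Observation.Amorphous-not-saturated}, and the $\omega$-standard models with an amorphous set), and theorem~\ref{Theorem.Omega-saturation-criterion} shows that the splitting property is precisely what characterizes the $\omega$-saturated models, not all models. So if $\mathcal{N}$ splits a cell $c_S^{\mathcal{N}}$ as $\infty+\infty$ via some $y$, while the matching cell $c_S^{\mathcal{M}}$ is amorphous-like, your strategy provides no $x\in\mathcal{M}$ with the same refined profile, and the back-and-forth breaks at that step.

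The lemma is nevertheless true, and the paper's proof is a one-line appeal to the quantifier-elimination theorem of~\cite{HamkinsKikuchi2016:Set-theoreticMereology}: every formula is equivalent to a Boolean combination of assertions $|\tau|=n$ for terms $\tau$ in $\cup,\cap,-$, and every such term is a disjoint union of Venn cells, so its size is determined by the cell profile. Your argument can be repaired by first passing to $\omega$-saturated elementary extensions of $\mathcal{M}$ and $\mathcal{N}$ (where the splitting is available by theorem~\ref{Theorem.Omega-saturation-criterion}) and running the back-and-forth there; types are preserved under elementary extension, so this suffices. But as written, the step that asserts the splitting holds in arbitrary models of the theory is simply wrong.
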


\begin{proof}
This is a consequence of the elimination of quantifiers argument from our previous paper~\cite[theorem~9]{HamkinsKikuchi2016:Set-theoreticMereology}. We proved that every assertion in the language of set-theoretic mereology is equivalent to a quantifier-free assertion in the language allowing the operations of union $\union$, intersection $\intersect$ and relative complement $x-y$ and the relations $|\tau|=n$, which assert that there are precisely $n$ atoms below $\tau$. It follows that a complete type $p(a_1,\ldots,a_n)$ must make such an assertion about every cell in the corresponding Venn diagram of those variables, and furthermore this information determines everything else that one can express about those variables in this language. (Note that in set-theoretic mereology, it follows from unboundedness that the exterior region, which is not represented by any term, must always be infinite.)
\end{proof}

Although the lemma shows that every type amounts in a sense to finitely many assertions about the cells in the Venn diagram, we are not claiming that every complete type is principal, because the assertion that a particular cell in the Venn diagram is infinite, as with the assertion $|a-(b\union c)|=\infty$ in the diagram above, is not expressible by a single formula in the language of set-theoretic mereology, but rather is expressible in the type as infinitely many assertions stating that that term has no particular finite size. Indeed, one cannot express in a single formula that a term is infinite, since in~\cite{HamkinsKikuchi2016:Set-theoreticMereology} we proved that $\<\HF,\of>$ is an elementary substructure of $\<V,\of>$, and the former mereological structure has no infinite sets, while the latter does.

The lemma implies that there are only countably many types in set-theoretic mereology, since in finitely many variables there are only finitely many cells in the Venn diagram and only countably many possible things to say about each cell. It follows on general model-theoretic grounds, using the omitting-types theorem, that there is therefore a \emph{prime} model, a model that embeds elementarily into all other models. In the case of set-theoretic mereology, this is the model consisting of all finite subsets of a fixed countable set, such as the case with the structure $\<\HF,\of>$ of hereditarily finite sets.

It is also an immediate consequence of lemma~\ref{Lemma.Types} that every computably saturated model of set-theoretic mereology is fully $\omega$-saturated, because the expressive power of the types is so limited: every complete type is logically equivalent to a computable type.

\bigskip\bigskip
\section{Saturated models of set-theoretic mereology}

Since lemma~\ref{Lemma.Types} provides a complete account of what one can say with a type in set-theoretic mereology, we can use this to give a necessary and sufficient criteria for a model of set-theoretic mereology to be $\omega$-saturated.

Let us refer to an element $u$ in a model of set-theoretic mereology as an \emph{infinite} element, if it is not the join of finitely many atoms. This concept is not expressible in the language of set-theoretic mereology, in light of the $\<\HF,\of>\prec\<V,\of>$ example, and so one should understand it externally as a definition in the model theory of set-theoretic mereology. In particular, if $\<M,\in>$ is a model of set theory, an element $u$ can be infinite in $\<M,\of^M>$ in this sense without $\<M,\in>$ necessarily thinking that $u$ is an infinite set; for example, perhaps $u$ is a nonstandard finite set in $M$.

\bigskip\goodbreak
\begin{theorem}\label{Theorem.Omega-saturation-criterion}
 A model of set-theoretic mereology $\<M,\sqof>$ is $\omega$-saturated if and only if
 \begin{enumerate}
   \item every infinite element of $M$ is the disjoint union of two infinite elements, and
   \item for every element $a\in M$, there is an infinite element $u\in M$ disjoint from $a$.
 \end{enumerate}
Equivalently, there are infinite elements and for every infinite element $u$ there is an element $x$ for which $u-x$, $u\intersect x$ and $x-u$ are each infinite.
$$\begin{tikzpicture}[every node/.style={scale=.7}]
\draw[fill=yellow, fill opacity=.1] (-7,0) circle (1);
\draw (-7,0) node {$\infty$};
\draw (-7,0) +(120:1.2) node[scale=1.3] {$u$};
\draw (-4,0) node[scale=3] {$\to$};
\draw[fill=yellow, fill opacity=.1] (-1,0) circle (1);
\draw (-1,0) +(120:1.2) node[scale=1.3] {$u$};
\draw[use Hobby shortcut,line width=1pt, line cap=round, dash pattern=on 0pt off 2.7\pgflinewidth, closed=true,OliveGreen,fill=OliveGreen,fill opacity=.15] (1,0) .. (60:.5) .. (90:.3) .. (120:.5) .. (180:1) .. (240:.5) .. (-90:.3) .. (-60:.5);
\draw (180:1.5) node {$\infty$};
\draw (0:.5) node {$\infty$};
\draw (180:.5) node {$\infty$};
\draw (1,.6) node[scale=1.3] {$x$};
\end{tikzpicture}$$
\end{theorem}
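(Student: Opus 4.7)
The plan is to apply Lemma~\ref{Lemma.Types} in both directions, since that lemma tells us a complete type in set-theoretic mereology is determined by the sizes (a specific finite value, or infinite) of each cell in the Venn diagram of its variables.

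For the forward direction, I would exhibit concrete types witnessing~(1) and~(2). Given an infinite $u$, the type
$$p(v)=\{v\sqof u\}\cup\{|v|\neq n\st n\in\omega\}\cup\{|u-v|\neq n\st n\in\omega\}$$
is finitely satisfiable because any infinite element must have infinitely many atoms below it --- otherwise atomicity would force $u$ to equal the finite join of its atoms, contradicting that $u$ is infinite --- so for any finite portion of $p$ we pick $v$ to be the join of enough atoms of $u$, leaving infinitely many in $u-v$. Similarly, given $a\in M$, the type $q(v)=\{|v\intersect a|=0\}\cup\{|v|\neq n\st n\in\omega\}$ is finitely satisfied by iterating unboundedness to produce arbitrarily many atoms disjoint from $a$. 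By $\omega$-saturation both types are realized, yielding (1) and~(2).

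For the backward direction I would take a finitely satisfiable complete $1$-type $p(v)$ with parameters $a_1,\ldots,a_n$ and construct a realization. By Lemma~\ref{Lemma.Types}, $p$ amounts to a size assignment on each cell of the Venn diagram of $v,a_1,\ldots,a_n$. The parameters alone partition the universe into cells $C_S=\bigcap_{i\in S}a_i-\bigcup_{j\notin S}a_j$ for $\emptyset\neq S\of\{1,\ldots,n\}$, together with an exterior region $E$ consisting of everything disjoint from $a_1\union\cdots\union a_n$, and $p$ specifies how $v$ splits each of these pieces. Finite satisfiability forces the sizes of the two subcells of a finite $C_S$ to sum correctly, and forces at least one of the two subcells of an infinite $C_S$ to be infinite. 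To assemble a realization, I would handle each cell in turn: within a finite $C_S$ pick the prescribed number of atoms; within an infinite $C_S$ with both subcells required infinite, invoke condition~(1) to split $C_S$; within an infinite $C_S$ with one finite subcell of size $m$, pick $m$ atoms by atomicity and let the complement in $C_S$ take the infinite side. For the exterior, condition~(2) applied to $a_1\union\cdots\union a_n$ supplies an infinite element $w$ disjoint from every parameter, and either $w$ itself --- optionally split by~(1) to accommodate any required infinite subdivision --- or a chosen finite set of its atoms provides the exterior contribution. The join of all these cell contributions is the desired realization of $p$.

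The main obstacle will be the handling of the exterior region $E$, which is not represented by any term in the parameters; condition~(2) is introduced precisely to supply material there, just as condition~(1) is what enables arbitrary infinite splittings inside the bounded cells. A final bookkeeping check establishes the equivalence of the two formulations in the statement: (1) and~(2) together give the splitting formulation by setting $x=u_1\union w$, where $u=u_1\sqcup u_2$ splits $u$ into infinite halves and $w$ is infinite and disjoint from $u$; conversely the splitting formulation gives~(1) immediately, and yields~(2) by cases on $a$, since for finite $a$ any infinite $u$ has $u-a$ infinite, while for infinite $a$ the splitting of $a$ itself produces an infinite $x-a$ disjoint from $a$.
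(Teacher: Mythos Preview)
Your proposal is correct and follows essentially the same approach as the paper: both directions use Lemma~\ref{Lemma.Types} to reduce types to Venn-cell size data, the forward direction exhibits the obvious types whose realizations yield (1) and (2), and the backward direction assembles a realizing element cell by cell, invoking (1) for infinite-infinite splits and (2) for the exterior contribution. You are in fact slightly more thorough than the paper, since you also spell out the bookkeeping for the ``equivalently'' reformulation, which the paper leaves implicit.
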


\begin{proof}
($\to$) If $\<M,\sqof>$ is $\omega$-saturated and $u$ is infinite, then we may write down the type $p(x,u)$ expressing that $x\intersect u$ and $u-x$ are both infinite. This is expressible by the infinite list of assertions that $|x\intersect u|\neq k$ and $|u-x|\neq k$ for any finite $k$. Because $u$ has infinitely many atoms below it, every finite collection of these assertions is realized in the model, and so the type is finitely realized. By $\omega$-saturation, therefore, the whole type is realized by some object $v\in M$. So $u$ can be partitioned into $v\intersect u$ and $u-v$, both of which are infinite. Similarly, for any element $a\in M$, let $q(x,a)$ assert that $x-a$ is infinite, by means of the assertions $|x-a|\neq k$ for every finite $k$. Since the lattice is unbounded, this type is finitely realized, and so by $\omega$-saturation, the whole type is realized. So we have found an infinite element disjoint from $a$.

($\leftarrow$) Assume that $\<M,\sqof>$ is a model of set-theoretic mereology with the two stated properties. The main point is that this is sufficient to realize any given consistent type. To see this, suppose that $p(x,a_0,\dots,a_n)$ is a complete $1$-type in the language of mereology with finitely many parameters $a_i\in M$ and which is finitely realized in $\<M,\sqof>$. 
We want to show that the type is realized in $\<M,\sqof>$. By lemma~\ref{Lemma.Types}, the type is making assertions about the sizes of the various cells in the Venn diagram of $x$ and the parameters $a_i$. Note that the full Venn diagram for $x$ together with the parameters $a_i$ is obtained from the Venn diagram of the parameters $a_i$ alone by allowing $x$ to cut each cell in that diagram into two pieces. 
\begin{figure}[h]
\begin{tikzpicture}[scale=1.5,xscale=1.2,every node/.style={scale=.5}]
\draw[fill=yellow, fill opacity=.1,label={[opacity=1]above:$x_1$}] (30:.5) circle (1);
\draw[fill=blue, fill opacity=.1] (150:.5) circle (1);
\draw[fill=red, fill opacity=.1] (-90:.5) circle (1);
\draw[use Hobby shortcut,line width=1pt, line cap=round, dash pattern=on 0pt off 2.7\pgflinewidth, closed=true,OliveGreen,fill=OliveGreen,fill opacity=.15] (45:1) .. (90:.75) .. (150:1.1) .. (-150:1.8) ..  (-110:1) .. (-140:.7) .. (-90:0) .. (-45:.4);
\draw (90:.2) node {$3$};
\draw (-90:.3) node {$2$};
\draw (30:1.25) node {$1$};
\draw (30:.9) node {$1$};
\draw (140:1.2) node {$\infty$};
\draw (160:1) node {$\infty$};
\draw (-90:1.1) node {$13$};
\draw (-120:1.15) node {$4$};
\draw (90:.95) node {$3$};
\draw (65:.7) node {$0$};
\draw (-10:.7) node {$\infty$};
\draw (-40:.8) node {$56$};
\draw (200:.75) node {$0$};
\draw (235:.66) node {$0$};
\draw (210:1.5) node {$\infty$};
\draw (135:1.65) node[scale=1.3] {$a$};
\draw (45:1.65) node[scale=1.3] {$b$};
\draw (-60:1.65) node[scale=1.3] {$c$};
\draw (210:2) node[scale=1.3] {$x$};
\draw (0:3) node[scale=1.3] {\parbox[c]{6cm}{\centering A complete type $p(x,a,b,c)$ makes assertions about how $x$ splits the cells in the Venn diagram of $a$, $b$ and $c$ and how much of $x$ is outside $a\union b\union c$.}};
\end{tikzpicture}
\caption{A complete $1$-type $p(x,a,b,c)$ with three parameters.}\label{Figure.Complete-type}
\end{figure}
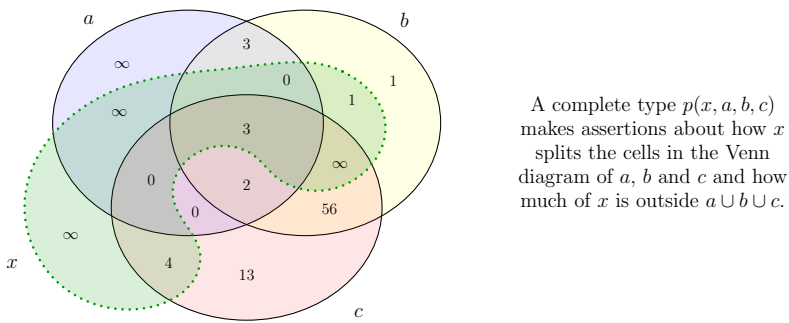
That is, the type $p(x,\vec a)$ is telling us how much to take from each cell in the Venn diagram of the parameters and how much to take from outside the union of the parameters (see~figure \ref{Figure.Complete-type}).

We claim that the type is realized, since we may simply assemble a realizing object $x$ by including the right number of objects from each cell, and this is possible under the assumptions that we have made about $\<M,\sqof>$. If the type asserts that $x\intersect\tau(\vec a)$ has some size $k$, for example, and that $\tau(\vec a)-x$ has size $r$, where $\tau(\vec a)$ is a term representing a cell in the Venn diagram of the parameters, then since the type is finitely realized, it follows that $\tau(\vec a)$ must have had size $k+r$ in $\<M,\sqof>$, and we may simply reserve $k$ of the objects from $\tau(\vec a)$ to place into $x$ and keep the others from that cell out of $x$. Similarly, if the type asserts that an infinite cell $\tau(\vec a)$ should be split in a certain way, including into two infinite pieces, or that there should be infinitely many elements of $x$ outside of $a_1\union\cdots\union a_n$, then these are also each possible by our assumption on $\<M,\sqof>$.

So for each cell in the Venn diagram, we may find a set $x$ satisfying the requirement that the type asserted for that cell, and the union of these local solutions, therefore, satisfies all the requirements of the type. So $\<M,\sqof>$ realizes every $1$-type $p(x,\vec a)$ with finitely many parameters that is consistent with its diagram, and so the structure is $\omega$-saturated.
\end{proof}

Since the models of set-theoretic mereology arising from a model of set theory generally exhibit the properties stated in theorem~\ref{Theorem.Omega-saturation-criterion}, it follows that they are $\omega$-saturated. There are some exceptions to this, however, including the $\omega$-standard models of finite set theory, such as $\<\HF,\in>$, which have no infinite sets at all, and the $\omega$-standard models of set theory with an \emph{amorphous} set. Recall that a set $A$ is \emph{amorphous} in set theory, if it is infinite, but every subset is finite or cofinite in $A$. The existence of such a set is refutable in \ZFC, using the axiom of choice, but it is relatively consistent with \ZF, without the axiom of choice, that amorphous sets exist. Unfortunately, the $\omega$-standard models of set theory with an amorphous set are not $\omega$-saturated, as we explain in observation~\ref{Observation.Amorphous-not-saturated}.

\goodbreak
\begin{theorem}\label{Theorem.Omega-saturated}
 If $\<M,\of^M>$ is a model of set-theoretic mereology arising as the inclusion relation of a model of set theory $\<M,\in^M>$ of any of the following kinds:
 \begin{itemize}
   \item any model of \ZFC\ set theory, or more generally
   \item any model of \KP\ without an amorphous set,
   \item any $\omega$-nonstandard model of \KP, even with amorphous sets, or
   \item any $\omega$-nonstandard model of finite set theory $\ZFfin$,
 \end{itemize}
 then $\<M,\of^M>$ is $\omega$-saturated.
\end{theorem}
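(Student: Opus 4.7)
The plan is to invoke Theorem~\ref{Theorem.Omega-saturation-criterion} and verify its two criteria---every mereologically infinite $u$ splits into two such pieces, and every $a\in M$ admits a mereologically infinite disjoint companion---uniformly across the four listed classes. The essential subtlety to keep in mind throughout is that \emph{mereologically infinite} means having infinitely many atoms externally, which in an $\omega$-nonstandard model can be strictly weaker than being $M$-infinite: an $M$-finite set of nonstandard $M$-cardinality is already mereologically infinite.

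For the splitting condition, I would case-split on $|u|^M$. If $|u|^M$ is a nonstandard $M$-natural number, then $M$ internally partitions $u$ into two halves of nonstandard $M$-size, and both pieces are mereologically infinite. If $u$ is $M$-infinite, then either $u$ is not amorphous in $M$---in which case an $M$-infinite subset of $u$ with $M$-infinite complement furnishes the splitting directly, covering \ZFC\ via choice and \KP\ without amorphous sets outright---or $u$ is amorphous, and by the hypotheses of the theorem $M$ must then be $\omega$-nonstandard. In that subcase I would take a subset $v\subseteq u$ of nonstandard $M$-finite cardinality, which exists by an internal induction inside $M$ on $M$-natural numbers, since any $M$-infinite set has $M$-finite subsets of each $M$-finite cardinality. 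The resulting $v$ is mereologically infinite by nonstandardness, while $u\setminus v$ is $M$-cofinite in $u$, hence $M$-infinite and a fortiori mereologically infinite.

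For the disjointness condition, given $a\in M$ with $\alpha=\rank^M(a)$, I would produce the required companion by a rank argument. In a model of \KP, $\Sigma_1$-recursion yields the $M$-infinite set $\{\alpha+n \mid n\in\omega\}^M$, whose elements all have $M$-rank above $\alpha$ and therefore lie outside $a$ by Foundation. In an $\omega$-nonstandard model of $\ZFfin$ there is no $\omega$ to work with, but for any nonstandard $N\in\omega^M$ the internal set $\{\alpha+k \mid k<N\}^M$ is $M$-finite of nonstandard size---hence mereologically infinite---and is again disjoint from $a$ by the same rank computation.

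The main obstacle is the amorphous-plus-$\omega$-nonstandard subcase of the splitting condition, where amorphousness precludes any internal splitting into two $M$-infinite pieces, and one must instead recognize that a mereologically infinite splitting can be achieved by pairing a nonstandard-finite subset with its $M$-cofinite complement---an option available precisely because the failure of $\omega$-standardness supplies $M$-finite subsets of arbitrary nonstandard size. Observation~\ref{Observation.Amorphous-not-saturated} should then be invoked to confirm that this maneuver is genuinely unavailable in the excluded $\omega$-standard amorphous case, where $\omega$-saturation truly fails and the hypothesis of the theorem cannot be weakened.
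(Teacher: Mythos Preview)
Your proposal is correct and follows essentially the same approach as the paper: reduce to the two criteria of Theorem~\ref{Theorem.Omega-saturation-criterion} and exploit the distinction between external (mereological) infinity and $M$-internal infinity, so that in $\omega$-nonstandard models the nonstandard $M$-finite sets already supply the needed splittings and companions. The paper's own proof is considerably sketchier---it simply asserts that the two partition properties hold in each listed class and that ``the problem of amorphous sets evaporates'' in $\omega$-nonstandard models---whereas you spell out the case analysis (nonstandard $M$-finite, $M$-infinite non-amorphous, $M$-infinite amorphous with $\omega$-nonstandardness) and give explicit witnesses via halving, rank, and $\Sigma_1$-recursion; this added detail is sound and makes the argument self-contained.
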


\begin{proof}
We are using \KP\ here as a stand-in for any essentially weak set theory, and considerabbly weaker theories suffice. What we require of the model of set theory $\<M,\in^M>$ is first, that it satisfies that the inclusion relation $\of^M$ is an unbounded atomic relatively complemented distributive lattice, and this is truly a very weak requirement on the set theory; and second, that it satisfies the two partition properties stated in theorem~\ref{Theorem.Omega-saturation-criterion}. In any model of \ZFC, for example, every infinite set (including the nonstandard finite sets, if the model is $\omega$-nonstandard) is the union of two disjoint infinite sets and every set has an infinite set disjoint from it. This is also true in \KP\ and even considerably weaker theories, provided that there is no amorphous set, an infinite set which cannot be split into two infinite subsets. If the model of set theory is $\omega$-nonstandard, then the problem of amorphous sets evaporates, since one can use nonstandard finite sets to realize the partition properties. The point is that the saturation criterion of theorem~\ref{Theorem.Omega-saturation-criterion} makes reference to the external concept of infinite elements, whereas amorphous sets make reference to the internal concept of infinite in set theory; in an $\omega$-nonstandard model, therefore, the nonstandard finite sets are infinite with respect to the lattice-theoretic concept, even though they are finite with respect to the internal set-theory of the model. It follows that all $\omega$-nonstandard models of even extremely weak set theories, including finite set theory $\ZFfin$, will have $\omega$-saturated mereological reducts. In summary, for all the kinds of models of set theory listed in the statement of the theorem, the corresponding inclusion relation $\<M,\of^M>$ fulfills the criterion of theorem~\ref{Theorem.Omega-saturation-criterion} and is therefore $\omega$-saturated.
\end{proof}

The import of theorem~\ref{Theorem.Omega-saturated} is that all countable $\omega$-saturated models of a complete theory are isomorphic, by the back-and-forth method. Thus, all of these countable models of set theory have isomorphic inclusion relations.

\begin{corollary}\label{Corollary.All-isomorphic}
 All countable models of \ZFC\ set theory have the same inclusion relation, up to isomorphism. More generally, the inclusion relations arising in any of the models of set theory of the type mentioned in theorem~\ref{Theorem.Omega-saturated} are all isomorphic.
\end{corollary}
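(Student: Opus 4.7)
The proof proposal is essentially to assemble two facts already in hand and invoke a standard model-theoretic principle. First I would note that by Theorem~\ref{Theorem.Omega-saturated}, whenever $\<M,\in^M>$ is a model of set theory of one of the listed types, the mereological reduct $\<M,\of^M>$ is $\omega$-saturated. Second, as was established in the introduction (and in the previous paper \cite{HamkinsKikuchi2016:Set-theoreticMereology} via Tarski/\Ersov{} quantifier elimination), the theory of set-theoretic mereology — an unbounded atomic relatively complemented distributive lattice — is a complete theory, so in particular every two models listed satisfy the same first-order theory in the signature $\{\of\}$.

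Now I would invoke the classical back-and-forth theorem: any two countable $\omega$-saturated models of a complete first-order theory are isomorphic. Given two countable models $\<M,\in^M>$ and $\<N,\in^N>$ of the type named in Theorem~\ref{Theorem.Omega-saturated}, both reducts $\<M,\of^M>$ and $\<N,\of^N>$ are countable $\omega$-saturated models of the (complete) theory of set-theoretic mereology, and so they are isomorphic. In particular, any two countable models of $\ZFC$ have isomorphic inclusion reducts, establishing the first sentence of the corollary as a special case.

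The argument has essentially no obstacles, since the real work has been done in Theorem~\ref{Theorem.Omega-saturation-criterion} and Theorem~\ref{Theorem.Omega-saturated}; the only thing to make sure of is that completeness of the mereological theory can legitimately be cited here, which it can by the results recalled in the introduction. If one wished to be self-contained, one could sketch the back-and-forth step directly: enumerate $M$ and $N$, and at each stage extend a finite partial isomorphism by realizing, in the opposite model, the complete type (over the already-matched finite parameter set) of the next element to be matched; $\omega$-saturation of the target model supplies the required realization, and Lemma~\ref{Lemma.Types} guarantees that such types are consistent with the target's diagram because both structures satisfy the same complete theory of set-theoretic mereology. This yields an isomorphism $\<M,\of^M>\iso\<N,\of^N>$, completing the proof.
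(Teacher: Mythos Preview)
Your proposal is correct and follows essentially the same approach as the paper: cite Theorem~\ref{Theorem.Omega-saturated} for $\omega$-saturation, cite completeness of set-theoretic mereology, and conclude by the back-and-forth construction that all countable $\omega$-saturated models of this complete theory are isomorphic. The paper's proof is simply a more compressed version of what you wrote, omitting the optional self-contained back-and-forth sketch.
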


\begin{proof}
If $\<M,\in^M>$ is a countable model of set theory of the type mentioned in the statement of the theorem, then the associated mereological structure $\<M,\of^M>$ is a countable $\omega$-saturated model of set-theoretic mereology. Since this is a complete theory, all such models are isomorphic by the back-and-forth construction.
\end{proof}

Let us draw out the consequences answering questions~\ref{Question.Which-models-arise?} and~\ref{Question.What-does-inclusion-know?}.

\begin{corollary}\label{Corollary.Which-models-arise}
 A countable model $\<M,\sqof>$ of set-theoretic mereology arises as the inclusion relation $\of^M$ of a countable model of set theory (of any desired type mentioned in theorem~\ref{Theorem.Omega-saturated}) if and only if it is $\omega$-saturated.
\end{corollary}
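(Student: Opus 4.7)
The plan is to invoke Theorem~\ref{Theorem.Omega-saturated} for the forward direction and combine it with the back-and-forth uniqueness of countable $\omega$-saturated models of a complete theory for the backward direction. The forward direction ($\to$) is immediate: if $\<M,\sqof>$ arises as the inclusion relation $\of^M$ of a countable model $\<M,\in^M>$ of one of the set theories listed in Theorem~\ref{Theorem.Omega-saturated}, then that theorem asserts directly that $\<M,\of^M>$ is $\omega$-saturated.

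For the backward direction ($\leftarrow$), fix any one of the types of set theory $T$ listed in Theorem~\ref{Theorem.Omega-saturated}, and suppose $\<M,\sqof>$ is a countable $\omega$-saturated model of set-theoretic mereology. First I produce a witness model of $T$: by standard arguments (downward \Lowenheim--Skolem for $\ZFC$ and $\KP$, and compactness to throw in a nonstandard integer for the $\omega$-nonstandard cases of $\KP$ or $\ZFfin$) one obtains a countable model $\<N,\in^N>\satisfies T$ of the chosen type. By Theorem~\ref{Theorem.Omega-saturated}, its inclusion reduct $\<N,\of^N>$ is $\omega$-saturated. Since the theory of set-theoretic mereology is complete by the quantifier-elimination argument of~\cite{HamkinsKikuchi2016:Set-theoreticMereology}, the two structures $\<M,\sqof>$ and $\<N,\of^N>$ are countable $\omega$-saturated models of the same complete theory, and so are isomorphic by the usual back-and-forth construction. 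Fix an isomorphism $\pi\colon\<M,\sqof>\iso\<N,\of^N>$. Now transport the membership structure from $N$ across $\pi$, defining a new relation $\in^*$ on $M$ by $x\in^* y\iff\pi(x)\in^N\pi(y)$. By construction $\pi$ becomes an isomorphism $\<M,\in^*>\iso\<N,\in^N>$, so $\<M,\in^*>\satisfies T$, and, since inclusion is first-order definable from membership, the inclusion relation derived from $\in^*$ is the $\pi$-pullback of $\of^N$, which is exactly $\sqof$. Thus $\<M,\sqof>$ arises as the inclusion relation of the model $\<M,\in^*>$ of $T$.

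The main content is the backward direction, but essentially all the work has been done in Theorem~\ref{Theorem.Omega-saturated} and in the completeness result of~\cite{HamkinsKikuchi2016:Set-theoreticMereology}; what remains is only the observation that an abstract isomorphism between mereological reducts can be used to pull back the entire membership structure of a witness model, so that $\<M,\sqof>$ literally is an inclusion reduct of a model of $T$, not merely isomorphic to one.
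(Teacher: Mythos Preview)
Your argument is correct and follows essentially the same route as the paper: the forward direction is immediate from theorem~\ref{Theorem.Omega-saturated}, and the backward direction uses the uniqueness (up to isomorphism) of the countable $\omega$-saturated model of the complete theory of set-theoretic mereology, together with pulling back the membership relation along the isomorphism. The paper's proof is terser, leaving implicit the existence of a countable witness model of the desired type and the pull-back step (which it spells out instead in the proof of corollary~\ref{Corollary.Alternative-theory}), but the content is the same.
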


\begin{proof}
  All such inclusion relations are $\omega$-saturated, and there is only one countable $\omega$-saturated model of set-theoretic mereology, since it is a complete theory.
\end{proof}

\begin{corollary}\label{Corollary.Alternative-theory}
 If $\<M,\sqof>$ arises as the inclusion relation of a countable model of set theory $\<M,\in^M>$ of the type mentioned in theorem~\ref{Theorem.Omega-saturated}, then for any alternative set theory $T$ extending some extremely minimal theory, there is a relation $\in^*$ on $M$ such that $\<M,\in^*>$ is a model of $T$ and $\<M,\sqof>$ is also the inclusion relation as defined in $\<M,\in^*>$.
\end{corollary}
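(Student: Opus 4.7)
The plan is to transfer an arbitrary model of $T$ onto $M$ along an isomorphism of mereological reducts. By Theorem~\ref{Theorem.Omega-saturated} the hypothesis on $\<M,\in^M>$ already guarantees that $\<M,\sqof>$ is a countable $\omega$-saturated model of set-theoretic mereology, so the work is in manufacturing a matching model of $T$ on the other side and then pulling the membership relation back.

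First I would produce a countable model $\<N,\in^N>\satisfies T$ whose inclusion reduct $\<N,\of^N>$ is also $\omega$-saturated. The robust way, working for any consistent $T$ in a computable language, is to take $\<N,\in^N>$ to be a countable computably saturated model of $T$; such a model exists by standard recursion-theoretic model theory. Computable saturation passes immediately to the reduct $\<N,\of^N>$, and by the remark following Lemma~\ref{Lemma.Types} every computably saturated model of set-theoretic mereology is in fact fully $\omega$-saturated, because in mereology each complete type is logically equivalent to a computable one. When $T$ itself already falls under the hypotheses of Theorem~\ref{Theorem.Omega-saturated}, any countable model of $T$ can be used directly.

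Both $\<M,\sqof>$ and $\<N,\of^N>$ are then countable $\omega$-saturated models of the complete theory of set-theoretic mereology, so Corollary~\ref{Corollary.All-isomorphic} supplies an isomorphism $\pi\colon\<M,\sqof>\iso\<N,\of^N>$. I would then define the desired membership relation on $M$ by pullback,
$$a\in^* b\Iff \pi(a)\in^N\pi(b),$$
so that $\pi$ becomes an isomorphism from $\<M,\in^*>$ onto $\<N,\in^N>$. This gives $\<M,\in^*>\satisfies T$ at once, and because $\pi$ is simultaneously an isomorphism of the inclusion reducts, the inclusion relation computed from $\in^*$ inside $M$ is the $\pi$-image of $\of^N$ pulled back along $\pi$, which is exactly $\sqof$.

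The only delicate point is the middle step: one must pin down what ``extremely minimal theory'' $T$ is required to extend. The base should be strong enough both to ensure that $\of$ defines an unbounded atomic relatively complemented distributive lattice in any model of $T$ and to be compatible with the saturation criterion of Theorem~\ref{Theorem.Omega-saturation-criterion}, so that the mereological reducts of countable (computably saturated, if need be) models of $T$ land in the single isomorphism class governed by Corollary~\ref{Corollary.All-isomorphic}. This is a matter of bookkeeping on the base theory rather than a genuine mathematical difficulty once the computable-saturation trick is in hand.
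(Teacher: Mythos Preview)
Your argument is correct and follows the same essential route as the paper: observe that $\<M,\sqof>$ is a countable $\omega$-saturated model of set-theoretic mereology, produce a countable model $\<N,\in^N>\satisfies T$ whose inclusion reduct is likewise $\omega$-saturated, invoke the uniqueness of countable $\omega$-saturated models of a complete theory to get an isomorphism, and pull the membership relation back along it. The paper's proof is exactly this, stated more tersely.

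The one place you go beyond the paper is your use of computable saturation. The paper simply takes ``extremely minimal theory'' to mean one of the theories already covered by theorem~\ref{Theorem.Omega-saturated}, so that \emph{any} countable model of $T$ (or any $\omega$-nonstandard one, in the finite-set-theory case) has an $\omega$-saturated inclusion reduct directly by that theorem. Your detour through a countable computably saturated model of $T$, combined with the observation after lemma~\ref{Lemma.Types} that computable saturation implies full $\omega$-saturation for mereology, is a legitimate and slightly more robust way to reach the same conclusion: it guarantees the reduct is $\omega$-saturated without having to check separately that $T$ rules out the amorphous and $\omega$-standard-finite exceptions, since computable saturation forces the model to be $\omega$-nonstandard anyway. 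This is a refinement rather than a different proof, and your closing remark about bookkeeping on the base theory accurately identifies the only residual issue, which the paper handles simply by fiat in the phrase ``extending some extremely minimal theory.''
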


This is a sense in which the mereological model $\<M,\sqof>$ knows very little set theory, since it cannot tell whether it came from the model $\<M,\in^M>$ or from the model $\<M,\in^*>$, and these can have extremely different theories. Indeed, since the theory $T$ was arbitrary, the original structure $\<M,\sqof>$ is the inclusion relation of a model of any sufficient set theory at all, with \CH, without \CH, with large cardinals, without large cardinals, with infinite sets, or without (but in this case with nonstandard finite sets).

\begin{proof}
If $\<M,\sqof>$ is the inclusion relation of a countable model of set theory $\<M,\in^M>$ of the type mentioned, then $\<M,\sqof>$ is $\omega$-saturated, and hence isomorphic to the inclusion relation of any other countable model of set theory or even countable nonstandard model of finite set theory. By pulling the set-membership relation of that other model back to $M$, we obtain a relation $\in^*$ on $M$ realizing exactly $\sqof$ as its inclusion relation and satisfying that other theory.
\end{proof}

Next, we point out that the omissions of the $\omega$-standard models of finite set theory and the $\omega$-standard models of set theory with amorphous sets in theorem~\ref{Theorem.Omega-saturated} are necessary.

\begin{observation}\label{Observation.Amorphous-not-saturated}
 If $\<M,\in^M>$ is an $\omega$-standard model of finite set theory or an $\omega$-standard model of set theory with an amorphous set, then $\<M,\of^M>$ is not $\omega$-saturated. The inclusion relation for these kinds of models is therefore not isomorphic to the inclusion relations of the other models of set theory mentioned in theorem~\ref{Theorem.Omega-saturated}.
\end{observation}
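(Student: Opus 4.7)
The plan is to verify directly that the saturation criterion of theorem~\ref{Theorem.Omega-saturation-criterion} fails in each of the two listed cases, by exhibiting a $1$-type (with parameters) that is finitely realized but omitted in $\<M,\of^M>$. The key observation for both cases is that in an $\omega$-standard model, the internal concept of finiteness coincides with the external one, so the internally/externally infinite atoms below a set agree in number.

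First I would handle the $\omega$-standard model $\<M,\in^M>$ of $\ZFfin$. Here every $a\in M$ is an internally finite set, hence an externally finite join of atoms, so there are no infinite elements of $\<M,\of^M>$ in the mereological sense. Consider the type $q(x) = \set{\,|x|\neq k : k<\omega\,}$. By the unboundedness of the lattice, for each $N$ we can find $x\in M$ with $|x|=N$, so $q$ is finitely realized; but no element of $M$ has infinitely many atoms beneath it, so $q$ is omitted. This already witnesses the failure of $\omega$-saturation (and directly violates clause~(2) of theorem~\ref{Theorem.Omega-saturation-criterion}).

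Next I would treat the $\omega$-standard model $\<M,\in^M>$ of set theory containing an amorphous set $A$. Since $\<M,\in^M>$ thinks $A$ is infinite and the model is $\omega$-standard, $A$ has externally infinitely many atoms below it in $\<M,\of^M>$, so $A$ is an infinite element in the mereological sense. Now consider the type
\[
  p(x,A) \;=\; \set{\,|x\intersect A|\neq k : k<\omega\,}\;\union\;\set{\,|A-x|\neq k : k<\omega\,}.
\]
For finite realization, given any $N$, pick $x\of A$ with $|x|=N$; then $|x\intersect A|=N$ realizes the first $N$ inequalities, and since $A$ is externally infinite, so is $A-x$, which realizes all inequalities of the second kind. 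However, $p(x,A)$ cannot be realized: any element $x\in M$ makes $x\intersect A$ an internal subset of the amorphous set $A$, hence internally (and so externally, by $\omega$-standardness) either finite or cofinite in $A$; in the first case some $|x\intersect A|=k$ is violated, in the second case $A-x$ is externally finite and some $|A-x|=k$ is violated. Thus $p$ is omitted, so clause~(1) of theorem~\ref{Theorem.Omega-saturation-criterion} fails at $u=A$, and $\<M,\of^M>$ is not $\omega$-saturated.

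Finally, the non-isomorphism claim is automatic: $\omega$-saturation is an isomorphism invariant, and theorem~\ref{Theorem.Omega-saturated} together with corollary~\ref{Corollary.All-isomorphic} shows that the inclusion relations of the other listed models are countable $\omega$-saturated models of set-theoretic mereology, all isomorphic to one another. The structures $\<M,\of^M>$ produced above are not $\omega$-saturated, so they cannot lie in that isomorphism class. The only mildly delicate step is the amorphous case, where one must be careful to use $\omega$-standardness to convert the internal dichotomy ``finite or cofinite in $A$'' into an external size statement about atoms; the rest is a direct application of the criterion already established.
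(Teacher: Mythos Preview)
Your proof is correct and follows essentially the same approach as the paper: for the $\omega$-standard finite-set-theory case you note there are no infinite elements (violating clause~(2) of theorem~\ref{Theorem.Omega-saturation-criterion}), and for the amorphous case you exhibit exactly the same type $p(x,A)$ asserting that $x\intersect A$ and $A-x$ are both infinite, arguing it is finitely realized but omitted. Your write-up is simply more explicit than the paper's, spelling out the types as schemes of size inequalities and carefully invoking $\omega$-standardness to pass between internal and external finiteness; the paper compresses this into two sentences.
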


\begin{proof}
 The $\omega$-standard models of finite set theory have no infinite sets, and therefore fail to fulfill the second criterion of theorem~\ref{Theorem.Omega-saturation-criterion}. If $\<M,\in^M>$ is $\omega$-standard and has an amorphous set $u$, then every subset of $u$ in $M$ is either finite or cofinite in $u$, and so $\<M,\of^M>$ does not realize the type $p(x,u)$ asserting that $x\intersect u$ and $u-x$ are both infinite, although this type is finitely consistent with the theory of $\<M,\of^M>$. So $\<M,\of^M>$ is not $\omega$-saturated.
\end{proof}

We have identified at least three distinct isomorphism types of models of set-theoretic mereology that arise as the inclusion relation of a model of some kind of set theory.
\begin{itemize}
  \item The $\omega$-saturated models arise as $\<M,\of^M>$ for the models of set theory mentioned in theorem~\ref{Theorem.Omega-saturated}.
  \item The prime model, where every set is finite, arises in the $\omega$-standard models of finite set theory $\ZFfin$, such as in $\<\HF,\of>$.
  \item Non-prime non-saturated models $\<M,\of^M>$ arise in an $\omega$-standard model of set theory with an amorphous set.
\end{itemize}

\begin{question}\label{Question.Amorphous-all-isomorphic?}
 Do all countable $\omega$-standard models of\/ \ZF\ set theory with an amorphous set have isomorphic inclusion relations?
\end{question}

Let us expand our understanding of what is possible in the theory of set-theoretic mereology by constructing additional pairwise non-isomorphic countable models of the theory. Fix any natural number $n$ and let $A_0,A_1,\ldots, A_n$ be disjoint infinite sets. Let $M$ consist of sets $A\of\union_i A_i$ such that $A\intersect A_i$ is finite or cofinite for each $i<n$ and $A\intersect A_n$ is finite. This is a collection of sets closed under union, intersection and relative complement; it contains the singleton subsets of any of its members; and it has no largest element. So it is an unbounded atomic relatively complemented distributive lattice and therefore a model of set-theoretic mereology. Notice furthermore that each $A_i$ for $i<n$ is in the collection $M$, and so $M$ has a family of $n$ infinite disjoint sets. But also, every infinite element of $M$ has cofinitely many members from some $A_i$ for $i<n$, and so by the pigeonhole principle there cannot be a family of $n+1$ many infinite disjoint sets in $M$. So $\<M,\of>$ is a model of set-theoretic mereology with this characteristic $n$, the size of the largest family of infinite disjoint sets. Since this characteristic is invariant by isomorphism, we have therefore constructed infinitely many non-isomorphic models of mereology.

Let us observe further that this characteristic, when finite, determines the isomorphism class of the structure. Suppose that $\<M,\sqof>$ is an arbitrary model of set-theoretic mereology with characteristic $n$, so that there are $n$ infinite disjoint sets $A_i$ for $i<n$, but there is no family of $n+1$ many infinite disjoint sets. (We identify every element of $M$ with the set of atoms below it.) In this case, every set in $M$ must agree either finitely or cofinitely with each $A_i$, and contain finitely many additional elements, for otherwise we could construct a family of size $n+1$. Furthermore, every such pattern is realized, since $\<M,\sqof>$ is an atomic relatively complemented distributive lattice, and since it is also unbounded, there must be infinitely many atoms not in any of the $A_i$. It follows that $\<M,\sqof>$ is isomorphic to the model we constructed in the previous paragraph, and so having finite characteristic $n$ is indeed an isomorphism classifier.

The $\omega$-saturated countable model of set-theoretic mereology definitely has infinitely many disjoint infinite sets, but this is not an isomorphism-classifier, on account of the models of \ZF\ with amorphous sets.

The ideas of the previous paragraphs amount to the beginnings of Tarski's classification of the elementary classes of Boolean algebras by means of what are now known as the Tarski invariants~\cite[theorem~5.5.10]{ChangKeisler1990:ModelTheory}, \cite[theorem~6.20]{Poizat2000:ACourseInModelTheory}, \cite[p.~66]{Hodges1993:ModelTheory}. \Ersov\ extended that work to the relatively complemented distributive lattices~\cite{Ershov1964:DecidabilityOfTheElementaryTheoryOfRelativelyComplementedLatticesAndOfTheTheoryOfFilters}, \cite[theorem~15.6]{Monk1976:MathematicalLogic} and~\cite[theorem~3.1.1]{BaudischSeeseTuschikWeese1985:DecidabilityAndQuantifierElimination}, and we expect those invariants to shed light on the classification of the models of set-theoretic mereology arising from models of set theory. In particular, we believe that the answer to question~\ref{Question.Amorphous-all-isomorphic?} will come by means of the Tarski/\Ersov\ invariants of Boolean algebras and relatively complemented lattices applied to the countable models of set-theoretic mereology, combined with knowledge of the models of \ZF\ with amorphous sets. For example, in any model of \ZF\ with an amorphous set $A$, there must be infinitely many pairwise-disjoint amorphous sets, since one may use $\singleton{\alpha}\times A$, and therefore in the quotient of the mereological structure by the ideal generated by the atoms, these will remain as infinitely many atoms in the quotient. It is less clear what happens upon iterating this quotient process, and this would seem to be the key to answering question~\ref{Question.Amorphous-all-isomorphic?}.

\section{Uncountable models of set-theoretic mereology}\label{Section.Uncountable-models}

Let us consider the situation for uncountable models of set-theoretic mereology. Alfredo Dolich pointed out that on general model-theoretic grounds, the analogue of corollary~\ref{Corollary.All-isomorphic} does not hold for uncountable models of set-theoretic mereology:

\begin{theorem}\label{Theorem.2^kappa-many-models}
  For every uncountable cardinal $\kappa$, there are $2^\kappa$ many pairwise non-isomorphic models of set-theoretic mereology arising as the inclusion relation $\of$ in a model of any particular set theory.
\end{theorem}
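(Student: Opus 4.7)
The strategy, following Dolich's observation, is that the $2^\kappa$ bound arises from a general model-theoretic phenomenon, combined with the flexibility already established for realizing mereological structures as inclusion reducts. The plan has two parts: first, exhibit $2^\kappa$ pairwise non-isomorphic models of set-theoretic mereology of size $\kappa$; second, realize each of them as the inclusion reduct of a model of the given set theory $T$.

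For the first part, I would identify cardinal invariants of $\<M,\sqof>$ that can be independently prescribed. For each infinite cardinal $\lambda \leq \kappa$, the quantity $I_\lambda(M) = \Card{\set{a \in M : a \text{ has exactly } \lambda \text{ atoms below it (externally)}}}$ is preserved under isomorphism, as is the total external number of atoms below any given element. Since there are $\kappa^+$ many cardinals $\lambda \leq \kappa$ and each such invariant can take many distinct values (from $0$ up to $\kappa$ or beyond), the space of possible invariant profiles is more than large enough to accommodate $2^\kappa$ distinct profiles. Alternatively, since set-theoretic mereology is a complete theory that fails to be $\omega_1$-categorical (as witnessed by the prime model, the countable $\omega$-saturated model, and the amorphous-type countable models discussed in the previous section), one can invoke Shelah's Main Gap Theorem to obtain $2^\kappa$ pairwise non-isomorphic models of this theory in every uncountable cardinality.

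For the second part, I fix a model $\<N,\in^N> \satisfies T$ of cardinality $\kappa$ via downward L\"owenheim-Skolem, and for each subset $S \of \kappa$ construct a companion model $\<M_S,\in^{M_S}> \satisfies T$ of size $\kappa$ by an elementary extension or Skolem hull construction over $N$. The construction chooses parameters in order to impose external cardinalities on a preselected $\kappa$-indexed sequence of internally defined sets in a way that encodes $S$ into the $I_\lambda$-profile of the resulting inclusion reduct $\<M_S, \of^{M_S}>$. Since $T$ is expressive enough to permit a wide range of cardinality configurations among its models of a given external size, enough distinct profiles will be realizable to capture all $2^\kappa$ subsets of $\kappa$.

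The main obstacle is the second step: one must verify that different $S$ genuinely yield non-isomorphic reducts. This reduces to a combinatorial argument showing that $2^\kappa$ independently controllable cofinality assignments can be achieved simultaneously within a single elementary extension (or a family of Skolem hulls) while preserving $T$. Preservation of $T$ is automatic for such constructions, so the real work is the combinatorics of assigning external cardinalities. Once that is in hand, the $2^\kappa$ many models of $T$ produced yield $2^\kappa$ pairwise non-isomorphic inclusion reducts, completing the proof.
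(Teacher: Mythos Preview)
Your approach is substantially different from the paper's, and the difference is instructive. The paper's proof is essentially three lines: the class of inclusion reducts of models of a fixed set theory $T$ is a pseudo-elementary (PC) class, since it consists exactly of the $\{\of\}$-reducts of models of $T$ in the larger language $\{\in\}$; the theory of set-theoretic mereology is unstable, because $\of$ defines an infinite partial order; and Shelah's many-models theorem for unstable PC classes then immediately yields $2^\kappa$ non-isomorphic members of this class in every uncountable cardinality $\kappa$. The PC-class observation is the key move you are missing: it bundles the ``realize each mereological structure as an inclusion reduct'' step and the ``count non-isomorphic structures'' step into a single invocation of Shelah's machinery, so that no explicit construction of models of $T$ is needed at all.

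Your hands-on plan, by contrast, separates these two tasks and leaves the second one genuinely incomplete. You propose to control invariants like $I_\lambda(M)$ via Skolem hulls or elementary extensions, but you do not actually carry out this construction, and it is not obvious that one can prescribe $2^\kappa$ distinct $I_\lambda$-profiles among models of a fixed theory $T$ of size $\kappa$; the structure imposed by $T$ may constrain these invariants in ways you have not addressed. Your own concluding paragraph effectively concedes this: ``once that is in hand'' signals that the main combinatorial work has not been done. Separately, your appeal to the Main Gap via failure of $\omega_1$-categoricity is not quite right: non-$\omega_1$-categoricity alone does not force $2^\kappa$ models in every uncountable cardinality (classifiable superstable theories can fail it while still having few models). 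What puts set-theoretic mereology on the many-models side is precisely its instability, and that is the hypothesis the paper uses.
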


\begin{proof}
Set-theoretic mereology is an unstable theory, because $\of$ is an order relation, and furthermore, the class of reducts to $\of$ of the models of a given set theory form a pseudo-elementary (PC) class. It follows by deep results of Shelah~\cite[chapter~VIII]{Shelah1990:Classification-theory-and-the-number-of-nonisomorphic-models} that for every uncountable cardinal $\kappa$, there are $2^\kappa$ many non-isomorphic models in that PC class.
\end{proof}

Next, we observe that if one begins with a saturated mereological model, then indeed it does arise as the inclusion relation of a model of set theory.

\begin{theorem}\label{Theorem.Saturated-models-are-realized}
If a (possibly uncountable) model $\<M,\sqof>$ of set-theoretic mereology is saturated, then it arises as the inclusion relation $\of^M$ with respect to a model $\<M,\in^M>$ of any desired consistent set theory.
\end{theorem}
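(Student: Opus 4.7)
The plan is to reduce to the uniqueness of saturated models of a complete theory and then transport structure along the resulting isomorphism. Let $\kappa=|M|$ and suppose $\<M,\sqof>$ is saturated. Recall from our earlier paper~\cite{HamkinsKikuchi2016:Set-theoreticMereology} that the theory of set-theoretic mereology is complete, and from theorem~\ref{Theorem.Omega-saturated} that the inclusion reduct of any suitable model of a consistent set theory $T$ is a model of this same mereological theory.

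First, I would produce a saturated model $\<N,\in^N>\satisfies T$ of cardinality $\kappa$. Saturated models of a complete theory at a given cardinality are unique up to isomorphism, and their existence at $\kappa$ follows from standard hypotheses (for instance $\kappa=\kappa^{\lttheta}$ where $\theta=|T|+\aleph_0$, or more uniformly under \GCH); absent such hypotheses one simply restricts attention to those $\kappa$ where saturated models are available, which is the intended reading of the theorem.

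Second, I would observe that the inclusion reduct $\<N,\of^N>$ is itself saturated of cardinality $\kappa$. This is the standard fact that reducts of saturated models are saturated: any type in the reduct language over fewer than $\kappa$ parameters is a fortiori a type in the full language of set theory, and is therefore realized in $\<N,\in^N>$. Hence $\<M,\sqof>$ and $\<N,\of^N>$ are two saturated models of the complete theory of set-theoretic mereology of the same cardinality $\kappa$.

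Third, by the uniqueness of saturated models there is an isomorphism $\pi\colon\<M,\sqof>\iso\<N,\of^N>$. I would pull the set-membership relation back along $\pi$, defining $x\in^M y$ just in case $\pi(x)\in^N\pi(y)$. Then $\pi$ becomes an isomorphism of $\<M,\in^M>$ with $\<N,\in^N>$, so $\<M,\in^M>\satisfies T$, and the inclusion relation computed internally in $\<M,\in^M>$ is by construction $\pi^{-1}(\of^N)=\sqof$, as required. The main obstacle is really just the existence issue in step one: saturated models of $T$ at arbitrary cardinalities need not exist in \ZFC, and the cleanest version of the theorem either assumes \GCH\ in the metatheory or restricts $\kappa$ to those for which a saturated model of $T$ can be produced; the mereological content of the argument, by contrast, is entirely carried by the completeness result of~\cite{HamkinsKikuchi2016:Set-theoreticMereology} together with the uniqueness of saturated models.
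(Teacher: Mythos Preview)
Your argument takes a different route from the paper's and has a genuine gap. The paper does not build a saturated model of $T$ at all; instead it invokes the fact that saturated models are \emph{resplendent}, and realizes directly on $M$ the $\Sigma^1_1$ theory asserting that a new binary relation $\hat\in$ satisfies $T$ and has $\sqof$ as its derived inclusion relation, after verifying this is consistent with the elementary diagram of $\<M,\sqof>$. The expansion $\<M,\hat\in>$ produced by resplendency is not required to be saturated as a model of $T$; it merely needs to exist.

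The gap in your step one is sharpest at $\kappa=\omega$. The countable $\omega$-saturated model of set-theoretic mereology is genuinely saturated, since the theory is small by lemma~\ref{Lemma.Types}, so the hypothesis of the theorem is met. But no consistent completion $T$ of \ZFC\ or \KP\ has a countable saturated model: each subset of $\omega$ determines a distinct $1$-type over $\emptyset$, so $T$ is not small. Your step one therefore fails outright in the countable case, which the theorem explicitly covers (``possibly uncountable'') and which is the paper's principal case of interest. This is not a matter of intended reading or of assuming \GCH. At uncountable $\kappa$ your approach can be salvaged---the existence of a saturated model of the unstable mereological theory at $\kappa$ forces $\kappa^{\lt\kappa}=\kappa$, whence a saturated model of $T$ does exist there---but you would have to supply that argument, and even then the countable case remains out of reach by your method.
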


\begin{proof}
Suppose that $\<M,\sqof>$ is a saturated model of set-theoretic mereology, and let $T$ be any consistent set theory, extending \KP, say, for definiteness. It follows that $\<M,\sqof>$ is resplendent (see~\cite[theorem 9.17]{Poizat2000:ACourseInModelTheory}). What this means is that any first-order assertion in the language of a new predicate symbol (that is, a $\Sigma^1_1$ assertion if one understands that one is asserting that there is a predicate satisfying the property) that is consistent with the elementary diagram of the model is realized already by an expansion of the model, adding an actual predicate, but not adding new elements.

Applying resplendency, let us consider the assertion in the language of a new binary relation $\hat\in$, asserting that $\hat\in$ is a model of $T$ and that $\sqof$ is the subset relation with respect to $\hat\in$. This assertion is consistent with the elementary diagram of $\<M,\sqof>$, because a finite subtheory of this theory is asserting merely that $\hat\in$ satisfies $T$ and there are finitely many sets having a certain number of elements in their respective Venn diagram cells. But those assertions are compatible with any model of $T$. Thus, there must be an elementary extension of $\<M,\sqof>$ in which the assertion about $\hat\in$ is realized. So by resplendency, it is already realized without adding any new elements. In other words, there is a relation $\hat\in$ on $M$ such that $\<M,\hat\in>$ is a model of $T$ and $\sqof$ is the inclusion relation $\of$ as defined in it, as desired.
\end{proof}

Conversely, it is easy to see that if $\<M,\in^M>$ is a saturated model of set theory, then the corresponding inclusion model $\<M,\of^M>$ is a saturated model of set-theoretic mereology.

But meanwhile, many uncountable models of set theory do not have saturated mereological inclusion relations.

\begin{theorem}\label{Theorem.Transitive-models-not-saturated}
No uncountable transitive model of set theory $\<M,\in>$ has a saturated inclusion relation $\<M,\of>$. Indeed, if $\<M,\in^M>$ is any model of set theory (a very weak theory suffices) with an element $w\in M$ for which the set of elements $\set{a\in M\mid a\in^M w}$ is countably infinite, then $\<M,\of^M>$ is not $\omega_1$-saturated.
\end{theorem}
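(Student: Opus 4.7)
My plan is to construct an explicit $1$-type in the language of set-theoretic mereology, using only countably many parameters, that is finitely satisfiable in $\<M,\of^M>$ but not realized there, thereby refuting $\omega_1$-saturation. The first assertion will reduce to the ``indeed'' clause, because any uncountable transitive model of sufficient set theory contains $\omega$ as an element whose external $\in$-extension is $\omega$ itself, and failure of $\omega_1$-saturation entails failure of saturation.

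Enumerate the $\in^M$-predecessors of $w$ externally as $\{a_n : n < \omega\}$, and let $\alpha_n = \{a_n\}^M \in M$ denote the $M$-singleton of $a_n$, which exists by pairing and is an atom of $\<M,\of^M>$ lying below $w$. I propose the type
\begin{equation*}
p(x) \;=\; \{\, x \sqof w,\; x \neq w \,\} \;\cup\; \{\, \alpha_n \sqof x : n < \omega \,\},
\end{equation*}
whose parameters $w, \alpha_0, \alpha_1, \ldots$ form a countable set, so realization would be demanded by $\omega_1$-saturation.

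For finite satisfiability, given any finite fragment of $p$ mentioning only $\alpha_{n_1}, \ldots, \alpha_{n_k}$, the witness $y = \alpha_{n_1} \union \cdots \union \alpha_{n_k}$ exists in $M$ by pairing and union, satisfies $y \sqof w$ since its $\in^M$-members lie among the $a_n$, satisfies $\alpha_{n_i} \sqof y$ trivially, and satisfies $y \neq w$ because any $a_m$ with $m \notin \{n_1,\ldots,n_k\}$ (which exists since $\{a_n\}$ is infinite) is an $\in^M$-predecessor of $w$ that $y$ lacks. For non-realization, suppose some $x \in M$ satisfied all of $p(x)$; then $x \sqof w$ gives $\{a : a \in^M x\} \subseteq \{a_n : n < \omega\}$, and the conjunction of the $\alpha_n \sqof x$ assertions gives $\{a_n : n < \omega\} \subseteq \{a : a \in^M x\}$. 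Hence $x$ and $w$ have identical $\in^M$-extensions, so extensionality in $\<M,\in^M>$ forces $x = w$, contradicting $x \neq w$.

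There is no substantive obstacle; the proof is mechanical once the right type is identified. The conceptual point is that the external countability of $w$'s $\in^M$-extension permits one to name \emph{all} of the atoms of $\<M,\of^M>$ below $w$ with countably many parameters, thereby translating internal extensionality in $\<M,\in^M>$ into a genuine $\omega_1$-saturation obstruction for the inclusion reduct. This is why the result holds even when $M$ is enormous and contains every external subset of $w$ as an internal element, as in $M = V_\kappa$ for inaccessible $\kappa$.
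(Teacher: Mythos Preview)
Your proof is correct and follows essentially the same approach as the paper: both exploit the external countability of the atoms below $w$ to name them all with countably many parameters and thereby build a finitely satisfiable but unrealized $1$-type. Your type (a proper subset of $w$ containing every atom below $w$) is simply the natural complement of the paper's type (a nonempty subset of $w$ containing no atom below $w$), and each is unrealized for the same underlying reason---atomicity/extensionality forces any subset of $w$ sharing all atoms with $w$ to equal $w$.
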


\begin{proof}
If $\<M,\in>$ is a transitive model of \ZFC, then $\omega\in M$ is such a set $w$ as in the second statement. So assume that we have a set $w\in M$ with $\set{a\in M\mid a\in^M w}$ being a countably infinite set. Let $p(x)$ be the type asserting that $x\of w$, that $x$ has at least one element, and that $\singleton{a}\not\of x$ for each $a$ with $a\in^M w$. That is, the type $p(x)$ asserts that $x$ is a nonempty subset of $w$, but that it doesn't contain as a subset any particular singleton $\singleton{a}$ of an element of $w$. The type is finitely realized in $\<M,\of^M>$, since we can easily find a subset of $w$ avoiding any particular finite list of elements, but the type cannot be realized in $\<M,\of^M>$, since if $x$ is to be a nonempty subset of $w$, it must contain at least one $\singleton{a}$ for $a\in w$ as a subset. Since the type uses countably many parameters, it follows that $\<M,\of^M>$ is not $\omega_1$-saturated.
\end{proof}

The argument of theorem~\ref{Theorem.Transitive-models-not-saturated} generalizes to other cardinals, showing that if $\<M,\in^M>$ has an element with $\kappa$ many $\in^M$-elements, then $\<M,\of^M>$ is not $\kappa^+$-saturated. Thus, if $\<M,\in^M>$ is a model of set theory whose inclusion relation is $\<M,\of^M>$ is saturated, then all infinite elements of $M$ must have the same cardinality as $M$ itself.

The case of $\omega_1$-like models of set theory is interesting. A model of set theory $\<M,\in^M>$ is \emph{$\omega_1$-like}, if it is uncountable, yet every set $a\in M$ has only countably many $\in^M$-elements. Equivalently, it is uncountable, but every rank initial segment of the model $(V_\alpha)^M$ is countable. It is a generally observed phenomenon for models of arithmetic and set theory that fundamental facts about the countable models often generalize to the $\omega_1$-like models, and when they do not, this is usually interesting. In light of this, it seems natural to ask: are the mereological reducts $\<M,\of^M>$ of the $\omega_1$-like models of set theory $\<M,\in^M>\satisfies\ZFC$ all isomorphic? The answer is no under the $\diamondsuit$ hypothesis.

\begin{theorem}\label{Theorem.Omega1-like-non-isomorphic}
  If $\diamondsuit$ holds and \ZFC\ is consistent, then there is a family of $2^{\omega_1}$ many $\omega_1$-like models of \ZFC\ with pairwise non-isomorphic inclusion relations.
\end{theorem}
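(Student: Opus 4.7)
The plan is a Keisler--Shelah style $\diamondsuit$ construction adapted to the mereological setting. For each $S\of\omega_1$ I would build an $\omega_1$-like elementary extension $\<M_S,\in^{M_S}>$ of a fixed countable ground model $\<N,\in^N>\satisfies\ZFC$, arranged so that the equivalence class of $S$ modulo the nonstationary ideal is recoverable from the isomorphism type of the mereological reduct $\<M_S,\of^{M_S}>$. Since there are $2^{\omega_1}$ equivalence classes of subsets of $\omega_1$ modulo nonstationary, this produces $2^{\omega_1}$ pairwise non-isomorphic inclusion reducts.

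First I would construct $M_S$ as a continuous elementary chain $\<N^S_\alpha:\alpha<\omega_1>$ of countable elementary extensions of $N$, taking unions at limits. At each successor stage $\alpha+1$ I would choose $N^S_{\alpha+1}\elesub N^S_{\alpha+1}$ so that no element of $N^S_\alpha$ acquires new $\in$-predecessors; this is the standard Gaifman--Keisler trick that ensures the union $M_S=\Union_\alpha N^S_\alpha$ is $\omega_1$-like. The $\diamondsuit$-sequence supplies, at each $\alpha$, a guess $\phi_\alpha$ for a partial mereological bijection between any pair of models $M_S,M_T$ under simultaneous construction.

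The coding step is the heart of the argument. At each stage $\alpha$ I would reserve a fresh element $c^S_\alpha$ appearing for the first time in $N^S_{\alpha+1}$, and install in its $\of$-neighborhood one of two distinct, detectable mereological configurations according to whether $\alpha\in S$. Since $\ZFC$ is extremely rich, both configurations can always be realized in a countable elementary extension. I would choose these configurations from the Tarski/\Ersov\ data of initial intervals of the lattice at or below $c^S_\alpha$---for instance, a two-valued invariant distinguishing whether a certain definable quotient sublattice carries a given finite characteristic in the sense discussed after question~\ref{Question.Amorphous-all-isomorphic?}. The crucial point is that such invariants are preserved under any mereological isomorphism, so their stationary distribution across $\alpha<\omega_1$ is an isomorphism invariant of $\<M_S,\of^{M_S}>$ modulo the nonstationary ideal.

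The diagonalization then runs in the usual $\diamondsuit$ fashion: if $\phi:\<M_S,\of^{M_S}>\iso\<M_T,\of^{M_T}>$ were a mereological isomorphism and $S\triangle T$ were stationary, then on a club $C\of\omega_1$ the restrictions of $\phi$ would give isomorphisms between the countable pieces $N^S_\alpha$ and $N^T_\alpha$, and at some $\alpha\in C\intersect(S\triangle T)$ the $\diamondsuit$-sequence will have guessed exactly this restriction; at that stage we arranged $c^S_\alpha$ and its would-be image under $\phi_\alpha$ to carry incompatible coded invariants, contradicting that $\phi$ extends $\phi_\alpha$. The main obstacle is the coding step: I must exhibit a mereological invariant that is simultaneously freely settable at each stage of the construction yet preserved under arbitrary mereological isomorphism. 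That suitable such invariants exist at the countable level is exactly the content of the Tarski/\Ersov\ analysis invoked after theorem~\ref{Theorem.2^kappa-many-models}, and the remaining work is bookkeeping to globalize them into a stationary-set coding along the $\diamondsuit$-chain.
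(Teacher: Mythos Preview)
Your proposal has a genuine gap at exactly the point you identify as ``the main obstacle,'' and the obstacle is fatal rather than merely technical. The coding step asks for a mereological invariant attached to an element $c^S_\alpha$ that can be freely set to one of two values inside a countable model of \ZFC. But the central result of this very paper (theorem~\ref{Theorem.Omega-saturated} and corollary~\ref{Corollary.All-isomorphic}) is that the inclusion reduct of \emph{every} countable model of \ZFC\ is $\omega$-saturated, hence they are all isomorphic, and indeed homogeneous. Below any infinite element the induced lattice is again $\omega$-saturated; there is no ``finite characteristic'' or any other Tarski/\Ersov\ data to vary, because those invariants take the same value throughout any \ZFC\ model. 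The examples with finite characteristic constructed after question~\ref{Question.Amorphous-all-isomorphic?} are precisely models of set-theoretic mereology that do \emph{not} arise from models of \ZFC. So the invariant you need---``simultaneously freely settable at each stage of the construction yet preserved under arbitrary mereological isomorphism''---provably does not exist at the countable level.

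The paper's argument avoids this by abandoning absolute coding altogether. Instead of recording $S$ in the isomorphism type of $M_S$, it builds a branching tree of top-extensions $M_s$ for $s\in 2^{<\omega_1}$ and uses $\diamondsuit$ to anticipate a \emph{specific} candidate isomorphism $j$ between two branches at some stage $\lambda$. It then kills that particular $j$ by a trace argument: having extended one side arbitrarily, there are only countably many traces $\tau(a)=\set{b\in M_t\mid b\of a}$ of new elements on the old model, but continuum many traces are realizable in top-extensions of the other side, so one can choose a new element $c$ whose trace matches none of the pullbacks $j^{-1}\tau(a)$. No extension of $j$ can then send $c$ anywhere consistent. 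This is an anticipation-and-diagonalization argument against individual isomorphisms, not a coding of an invariant; that distinction is forced precisely because the mereological theory is too tame to support the coding you propose.
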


\begin{proof}
The main idea is to follow the construction method of~\cite{FuchsGitmanHamkins2017:IncomparableOmega1-likeModelsOfSetTheory}, building a tree of top-extensions of models and using the $\diamondsuit$-sequence to anticipate and then kill off possible isomorphisms of the inclusion relation at each stage.

Assume that $\<A_\alpha\mid\alpha<\omega_1>$ is a $\diamondsuit$-sequence. We shall assign to each transfinite binary sequence $s\in 2^{<\omega_1}$ a countable model $M_s\satisfies\ZFC$ in such a way that they form elementary top-extensions as one lengthens $s$, and each $M_s$ is built on a countable subset of $\omega_1$. We start at the bottom with a given countable model $M_\emptyset$ of \ZFC. At most stages of the construction, including every finite stage and every stage that is not a limit ordinal, if $M_s$ has been defined then we may let $M_{s\concat0}$ and $M_{s\concat 1}$ be arbitrary countable elementary top-extensions of $M_s$ (see~\cite{KeislerMorley1968:ElementaryExtensionsOfModelsOfSetTheory}, proof also given in~\cite[lemma 2]{FuchsGitmanHamkins2017:IncomparableOmega1-likeModelsOfSetTheory}). The interesting case occurs at successors of limit ordinals. If $M_s$ is defined for all $s\in 2^{<\lambda}$, where $\lambda$ is a limit ordinal, we define $M_s$ for $s\in 2^\lambda$ as the union of the elementary chain $M_{s\restrict\alpha}$ for $\alpha<\lambda$. And next, the crucial step. We look at $A_\lambda$ and inquire whether by some unlikely miracle it happens to code a pair of distinct sequences $s,t\in 2^\lambda$ and an isomorphism $j:\<M_s,\of^{M_s}>\cong \<M_t,\of^{M_t}>$, whose underlying sets are contained in $\lambda$. If not, we extend arbitrarily as usual, but if it does, then we shall now extend $M_s$ and $M_t$ in such a way so as to prevent this particular isomorphism from growing.

In order to do so, first extend $M_t$ arbitrarily to $M_{t\concat0}=M_{t\concat 1}$. For each $a\in M_{t\concat0}$, we consider the \emph{trace} of $a$ on $M_t$, which is $\tau(a)=\set{b\in M_t\mid M_{t\concat0}\satisfies b\of a}$. Since there are only countably many new elements $a$, it follows that there are also only countably many trace sets $\tau(a)$. Pulling back under $j$, we may consider the corresponding traces on $M_s$, namely, $j^{-1}\tau(a)=\set{b\in M_s\mid M_{t\concat 0}\satisfies j(b)\of a}$. We claim as in~\cite[lemma 3]{FuchsGitmanHamkins2017:IncomparableOmega1-likeModelsOfSetTheory} that there are continuum many traces on $M_s$ realized in various top-extensions of $M_s$, and so we may find a top-extension of $M_s$ to a model $M_{s\concat 0}=M_{s\concat 1}$ with a new element $c$ whose trace on $M_s$ is different from all those $j^{-1}\tau(a)$. It follows that no further top-extension of $M_{s\concat 0}$ and $M_{t\concat 0}$ to models $M$ and $N$, respectively, can have an isomorphism $j:\<M,\of^M>\cong \<N,\of^N>$ extending $j$, since the trace of $j(c)$ on $M_t$ will have to agree with the trace of some element $a\in M_{t\concat 0}$ on $M_t$, since in $N$ we may take $a=j(c)\intersect V_\alpha^N$ for some rank $\alpha$ between the height of $M_t$ and $M_{t\concat 0}$, and this would mean that the trace of $c$ on $M_s$ would agree with $j^{-1}\tau(a)$, contrary to our choice of $c$. This completes the construction of the models $M_s$ for $s\in 2^{<\omega_1}$. To summarize, we have built a tree of countable top-extensions $M_s$ for $s\in 2^{<\omega_1}$, and at each stage, if the $\diamondsuit$-sequence hands us an isomorphism of the mereological reducts of two models at a given stage, then we extend those models at that stage so as to kill off that isomorphism and prevent it from extending.

To each uncountable binary sequence $S\in 2^{\omega_1}$, let $M_S$ be the union of the elementary chain $M_{S\restrict\alpha}$ for $\alpha<\omega_1$. Thus, we have a family of $2^{\omega_1}$ many $\omega_1$-like models of \ZFC. We claim that the mereological reducts $\<M_S,\of^{M_S}>$ of these models are pairwise non-isomorphic. To see this, assume toward contradiction that $j:\<M_S,\of^{M_S}>\cong\<M_T,\of^{M_T}>$ is an isomorphism for $S\neq T$. We may code $S$, $T$ and $j$ with a subset of $\omega_1$, and so by the $\diamondsuit$ hypothesis, there is a stationary set of $\lambda$ where $A_\lambda$ codes $S\intersect\lambda$, $T\intersect\lambda$ and $j\restrict\lambda$, and where $M_{S\restrict\lambda}$ and $M_{T\restrict\lambda}$ have underlying set contained in $\lambda$. In this case, we had exactly extended these models so as to prevent $j\restrict\lambda$ from extending further, contrary to our assumption that $j$ is an isomorphism of the full models $M_S$ and $M_T$. So there can be no such isomorphism.
\end{proof}

\begin{question}
 Can the $\diamondsuit$ hypothesis in theorem~\ref{Theorem.Omega1-like-non-isomorphic} be omitted?
\end{question}

\section{Class-theoretic mereology}

Let us now extend our analysis of set-theoretic mereology to the case of the various second-order set theories, such as \Godel-Bernays set theory \GBC\ or Kelley-Morse set theory \KM, which allow proper classes as objects in the theory. Although these second-order set theories are commonly presented in a two-sorted language, with one sort for the first-order objects, the sets, and another sort for the second-order objects, the classes, nevertheless both \GBC\ and \KM\ and most of the other second-order set theories also admit one-sorted formalizations, where every object is a class. In that manner of formalism, the sets are simply special kinds of classes, the classes that happen to be an element of another class.

If $\<M,\in>$ is such a model of class theory, then we may define the usual inclusion relation $\of$ on classes and consider the \emph{class-theoretic mereological structure} $\<M,\of>$, where we keep all the classes of $M$ but now have only the inclusion relation. It is easy to see that $\<M,\of>$ is an atomic Boolean algebra with infinitely many atoms. This is a complete theory by classical results of Tarski, and because of how we arrived at this theory, we shall refer to it as the theory of class-theoretic mereology.

Our main result for these structures is that the class-theoretic mereological reducts $\<M,\of^M>$ are all $\omega$-saturated, and therefore all countable such models are isomorphic (theorem~\ref{Theorem.Class-theory-omega-saturated} and corollary~\ref{Corollary.Class-theory-all-isomorphic}).

Tarski provided an elimination-of-quantifiers construction, showing that every assertion in the language of Boolean algebras is equivalent in class-theoretic mereology to a quantifier-free assertion in the language of Boolean algebras augmented by the relations $|\tau|=n$, which asserts that object $\tau$ has precisely $n$ atoms below it. Indeed, that quantifier-elimination argument was the inspiration for the argument we had given in~\cite{HamkinsKikuchi2016:Set-theoreticMereology} for the case of set-theoretic mereology. It follows that we get an analogue of lemma~\ref{Lemma.Types} for class-theoretic mereology.

\begin{lemma}\label{Lemma.Class-types}
 If $p(a_1,\ldots,a_n)$ is a complete type with $n$ free variables in the language of mereology, then $p(a_1,\ldots,a_n)$ is equivalent over the theory of class-theoretic mereology to the assertions stating for each cell in the Venn diagram of the variables (including the universal class, providing the exterior region) that it has some specific finite size or that it is infinite.
$$\begin{tikzpicture}[xscale=1.2,every node/.style={scale=.7}]
\draw (-1.8,-1.8) rectangle (1.8,1.7);
\draw[fill=yellow, fill opacity=.2] (30:.5) circle (1);
\draw[fill=blue, fill opacity=.2] (150:.5) circle (1);
\draw[fill=red, fill opacity=.2] (-90:.5) circle (1);
\draw (0,0) node {$5$};
\draw (30:1.1) node {$2$};
\draw (150:1.1) node {$\infty$};
\draw (-90:1.1) node {$17$};
\draw (90:.75) node {$3$};
\draw (-30:.75) node {$\infty$};
\draw (210:.75) node {$0$};
\draw (-45:1.8) node {$57$};
\end{tikzpicture}$$
\end{lemma}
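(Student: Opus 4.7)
The plan is to mirror the proof of lemma~\ref{Lemma.Types} as closely as possible, with the single crucial modification that in the class-theoretic setting the top element (the universal class) is available, so the exterior region of the Venn diagram is itself represented by a term and must be assigned a size by the type.

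First, I would invoke Tarski's classical elimination of quantifiers for atomic Boolean algebras in the language augmented by the predicates $|\tau|=n$ (cited in the paragraph preceding the lemma). This reduces any formula $\varphi(a_1,\ldots,a_n)$ in the language of class-theoretic mereology to a Boolean combination of atomic formulas of the form $\sigma=\tau$, $\sigma\sqof\tau$, and $|\tau|=k$, where $\sigma,\tau$ are built from $a_1,\ldots,a_n$ using $\union$, $\intersect$, and complementation (which is available here because of the top element $\one$).

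Next, I would argue that all such atomic assertions can be rephrased as assertions about cell sizes. Because complements are available, every term $\tau(a_1,\ldots,a_n)$ is provably equal in the theory of Boolean algebras to a finite disjoint union of cells of the Venn diagram of $a_1,\ldots,a_n$, namely of terms of the form $\bigcap_{i\in S}a_i \intersect \bigcap_{i\notin S}(\one- a_i)$ for $S\subseteq\{1,\ldots,n\}$ (the case $S=\emptyset$ giving the exterior cell). An equation $\sigma=\tau$ then reduces to a conjunction of assertions that certain cells are empty; the inclusion $\sigma\sqof\tau$ reduces likewise; and $|\tau|=k$ becomes an assertion determined by the sizes of the cells in $\tau$'s decomposition (a finite partition of $k$ among those cells). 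So every quantifier-free formula in the variables $a_1,\ldots,a_n$, and hence every formula, is equivalent over class-theoretic mereology to a Boolean combination of assertions of the form $|c_S|=k$, where $c_S$ ranges over the $2^n$ cells of the Venn diagram (including the exterior one $c_\emptyset$).

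Finally, for the type-theoretic conclusion: a complete type $p(a_1,\ldots,a_n)$ must decide, for each cell $c_S$ and each natural number $k$, whether $|c_S|=k$. If for some $k$ the type contains $|c_S|=k$, this pins down the size of $c_S$ to that finite value; otherwise the type contains $|c_S|\neq k$ for every $k$, and (since the theory proves that every object has either some specific finite size or is infinite in the sense of having no finite size) this collection of assertions is equivalent to saying that $c_S$ is infinite. Conversely, any consistent assignment of sizes (finite or infinite) to the cells $c_S$ determines a complete type via the quantifier-elimination above. This gives exactly the claimed characterization.

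The only real point to watch is the treatment of the exterior cell $c_\emptyset=\one-(a_1\union\cdots\union a_n)$: in the set-theoretic case its size was forced to be infinite by unboundedness and it was not represented by a term, whereas here it is a bona fide term whose size is genuinely part of the data of the type. I do not expect a substantive obstacle; the proof is essentially a transcription of lemma~\ref{Lemma.Types} with this single adjustment.
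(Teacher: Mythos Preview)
Your proposal is correct and follows essentially the same approach as the paper: invoke Tarski's quantifier elimination for atomic Boolean algebras (cited immediately before the lemma), observe that every term decomposes into Venn-diagram cells including the exterior region, and conclude that a complete type is determined by the size data on each cell. The paper's own proof is in fact much terser than yours---it simply says the argument is the same as for lemma~\ref{Lemma.Types} via the quantifier-elimination result---so your version spells out in detail exactly what the paper leaves implicit.
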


One difference between the type assertions here and the case of set-theoretic mereology is that in set-theoretic mereology, the exterior region was always infinite, since one of the axioms was that the lattice is unbounded; but in class theory, there is a universal class $V$, and one can have cofinite proper classes and so on.

\begin{proof}
The proof is essentially just the same as for lemma~\ref{Lemma.Types}. If we have a complete type $p(a_1,\ldots,a_n)$, then it will assert particular values for those cells in the Venn diagram, and the point is that this information completely determines the rest of the type by the quantifier-elimination result.
\end{proof}

\begin{theorem}\label{Theorem.Class-theory-omega-saturated}
 If $\<M,\in^M>$ is a model of \GBC, but considerably less suffices, then the corresponding inclusion relation $\<M,\of^M>$ is an $\omega$-saturated model of class-theoretic mereology, an $\omega$-saturated infinite atomic Boolean algebra.
\end{theorem}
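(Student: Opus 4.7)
The plan is to mirror the proof of Theorem~\ref{Theorem.Omega-saturated}, now in the Boolean-algebra setting afforded by Lemma~\ref{Lemma.Class-types}. First I would formulate and prove an analog of the saturation criterion from Theorem~\ref{Theorem.Omega-saturation-criterion}: namely, a model $\<M,\sqof>$ of class-theoretic mereology is $\omega$-saturated if and only if every mereologically infinite element of $M$ splits as the disjoint union of two mereologically infinite elements. The key difference from the set-theoretic case is that I expect no separate clause of the form ``there is an infinite element disjoint from $a$'' to be needed, because in a Boolean algebra the exterior region $V-(a_1\union\cdots\union a_n)$ in the Venn diagram of any parameter tuple is itself an element of $M$, so the single splitting condition, applied to that element, handles whatever the type wants to say about the exterior.

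Then I would verify the criterion. For the forward direction, given an infinite $u$, the type asserting $|x\intersect u|\neq k$ and $|u-x|\neq k$ for all finite $k$ is finitely consistent because $u$ has infinitely many atoms, so $\omega$-saturation produces a realizer, which supplies the desired splitting. For the backward direction, given a consistent complete $1$-type $p(x,a_1,\ldots,a_n)$, Lemma~\ref{Lemma.Class-types} tells us that $p$ is determined by the sizes (some specific finite $k$, or $\infty$) it assigns to each cell in the Venn diagram of $x$ with the parameters, including the exterior cell. Each cell $\tau(\vec a)$ of the $\vec a$-diagram is itself an element of $M$, and $p$ prescribes how it must be split between $x$ and its complement. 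One assembles a realizer by choosing finitely many atoms from each cell the type declares finite (finite consistency ensures the cell in $M$ has the correct total size) and, for each cell the type declares infinite with an infinite/infinite split, invoking the splitting hypothesis on that cell.

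Finally, I would verify the splitting criterion in any model of \GBC. Under global choice every infinite class is in bijection with some ordinal and therefore partitions into two infinite subclasses; in particular, every infinite set splits, and every infinite proper class splits. In an $\omega$-nonstandard model, even a nonstandard finite set, which is externally mereologically infinite, splits internally into two such sets. The phrase ``considerably less suffices'' then reflects that one needs only enough of the background theory to prove this infinite/infinite partition property for every infinite class, which is a very mild requirement. I do not anticipate a serious obstacle here: the main conceptual point, as opposed to a technical difficulty, is to recognize that the universal class $V$ absorbs the role played by unboundedness in the set-theoretic case, so that a single splitting clause suffices for the criterion.
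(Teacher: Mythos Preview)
Your proposal is correct and follows essentially the same approach as the paper, which simply says to argue ``in essentially the same manner'' as theorem~\ref{Theorem.Omega-saturated}, using lemma~\ref{Lemma.Class-types} to reduce any finitely realized type to a pattern of cell sizes and then assembling a realizer cell by cell. Your explicit observation that the second clause of the saturation criterion becomes unnecessary in the Boolean-algebra setting---because the exterior region $V-(a_1\cup\cdots\cup a_n)$ is itself an element of $M$---is a nice clarification that the paper leaves implicit.
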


\begin{proof}
We can prove this theorem in essentially the same manner as we proved theorem~\ref{Theorem.Omega-saturated}. Given any complete type $p(x,\vec a)$ that is finitely realized in $\<M,\of^M>$, we know by lemma~\ref{Lemma.Class-types} that $p$ is asserting that $x$ exhibits a certain pattern of sizes for the cells in the Venn diagram of the parameters. But any model of class theory is able to realize any such finite pattern, just as before, provided that we are not in an $\omega$-standard model of finite set theory or an $\omega$-standard model with an amorphous set.
\end{proof}

\begin{corollary}\label{Corollary.Class-theory-all-isomorphic}
 All countable models of \GBC\ have the same inclusion relation, up to isomorphism. Specifically, if $\<M,\in^M>$ and $\<N,\in^N>$ are each countable models of \GBC, then $\<M,\of^M>$ is isomorphic to $\<N,\of^N>$.
\end{corollary}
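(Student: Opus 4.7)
The plan is to reduce the corollary directly to Theorem~\ref{Theorem.Class-theory-omega-saturated} together with the standard back-and-forth argument for countable $\omega$-saturated models of a complete theory. First, I would note that the theory of class-theoretic mereology, as discussed just before Lemma~\ref{Lemma.Class-types}, is a complete theory: it is the theory of an infinite atomic Boolean algebra, which by classical results of Tarski (and the quantifier-elimination argument cited in the paper) is complete in the language of Boolean algebras. In particular, $\<M,\of^M>$ and $\<N,\of^N>$ satisfy exactly the same sentences in the language of mereology.

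Next, I would apply Theorem~\ref{Theorem.Class-theory-omega-saturated} to both models: since $\<M,\in^M>$ and $\<N,\in^N>$ are each models of $\GBC$, the corresponding inclusion reducts $\<M,\of^M>$ and $\<N,\of^N>$ are each $\omega$-saturated models of class-theoretic mereology. Both models are countable by hypothesis, and both have the same complete theory by the previous paragraph.

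Finally, I would invoke the classical model-theoretic fact that any two countable $\omega$-saturated models of a common complete theory are isomorphic, via a standard back-and-forth construction: one enumerates $M$ and $N$ and alternately extends a finite partial elementary map by realizing in the opposite model the complete type over the current finite domain, which is possible precisely because of countability together with $\omega$-saturation. Applying this to $\<M,\of^M>$ and $\<N,\of^N>$ yields the desired isomorphism.

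There is essentially no obstacle here beyond quoting the right prior results. The only minor point worth being careful about is ensuring that the completeness of the theory of class-theoretic mereology is indeed in hand: this is established by the quantifier-elimination discussion and Lemma~\ref{Lemma.Class-types}, which together imply that any two models with the appropriate Venn-cell size spectrum realized satisfy the same sentences, so completeness is not a genuine hurdle. Thus the corollary follows immediately from Theorem~\ref{Theorem.Class-theory-omega-saturated} in the same way that Corollary~\ref{Corollary.All-isomorphic} followed from Theorem~\ref{Theorem.Omega-saturated}.
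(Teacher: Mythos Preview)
Your proposal is correct and matches the paper's own proof essentially verbatim: the paper simply observes that $\<M,\of^M>$ and $\<N,\of^N>$ are each $\omega$-saturated models of the same complete theory (by Theorem~\ref{Theorem.Class-theory-omega-saturated}) and hence isomorphic by the back-and-forth construction. Your additional remarks about why the theory is complete and how the back-and-forth works are accurate elaborations, but the underlying argument is the same.
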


\begin{proof}
Since $\<M,\of^M>$ and $\<N,\of^N>$ are each $\omega$-saturated models of the same complete theory, they are isomorphic by the back-and-forth construction.
\end{proof}

The class theory required in these theorems is extremely weak. All that is needed is to prove that $\of$ forms an infinite atomic Boolean algebra and the ability to realize types of the limited expressive power identified in lemma~\ref{Lemma.Class-types}. For example, one can make an $\omega$-saturated model by starting even with a nonstandard model of finite set theory $\ZFfin$ and adding as classes all the definable classes. One has a natural theory $\GBC^{\neg\infty}$ corresponding to this situation, and this structure could be thought of as a class-theoretic analogue of finite set theory, but it still is sufficient to support the $\omega$-saturated argument of theorem~\ref{Theorem.Class-theory-omega-saturated}. For example, the model $\<\HF,\in>$ augmented with its definable classes is isomorphic in the inclusion $\of$ reduct to the inclusion reduct of any countable model of full \GBC. We find this remarkable.

Let us mention specifically, however, that one must again exclude the $\omega$-standard models of class theory having an amorphous set, as in this case, the corresponding mereological structures are not $\omega$-saturated. 

Apart from the amorphous exception, the general conclusion again is that the inclusion relation $\of$ of class theory knows very little about the theory of $\in$ in the model from which it arose. Every model of class-theoretic mereology $\<M,\of^M>$ arising as the inclusion relation of some model of class theory also arises identically as the inclusion relation of other models of class theory, with totally different theories, and indeed any given model of class theory is isomorphic to a model giving rise identically to $\of^M$. So if all you know is the inclusion relation $\of$, you cannot tell whether the model of class theory had the continuum hypothesis, whether it had large cardinals, or indeed whether it though the axiom of infinity was true.

Indeed, it follows from theorem~\ref{Theorem.Class-theory-omega-saturated} that for a countable model of class theory $\<M,\in>$, there is no difference between a proper class with infinite complement and an infinite set with a proper class complement, with respect to properties in the corresponding mereological structure $\<M,\of^M>$. For example, one may consider an infinite set of natural numbers in the model or the class of all ordinals in the model, and in the language of inclusion $\of$ both of these classes realize the same parameter-free type by lemma~\ref{Lemma.Class-types}, and so there is an automorphism of $\<M,\of^M>$ swapping them. One cannot distinguish between any two infinite co-infinite classes in mereology, even if one of them begins as a proper class and the other begins as a mere infinite set. Thus, automorphisms of the inclusion relation $\<M,\of^M>$ need not respect the set/class distinction, and in this sense the set/class distinction is not expressible in class-theoretic mereology.

Much of the rest of our treatment of set-theoretic mereology also extends to class-theoretic mereology. For example, although there is a unique countable model arising as the inclusion relation from a model of class theory, nevertheless for uncountable cardinals $\kappa$ there will be $2^\kappa$ many non-isomorphic models of size $\kappa$ arising as the inclusion relation $\of$ of a model of any given class theory. And the inclusion relation of an uncountable transitive model of \GBC\ is never saturated, nor even $\omega_1$-saturated.

Let us conclude the paper by mentioning briefly David Lewis's extended philosophical treatment of the mereological content of class theory in his book~\cite{Lewis1991:PartsOfClasses}. Because his approach gives a central role to the singleton operator $a\mapsto\singleton{a}$, our perspective is that it is consequently closer to the class theory of \Godel-Bernays or Kelley-Morse than it is to the purely mereological theory of inclusion that we consider here. After all, as we pointed out in~\cite{HamkinsKikuchi2016:Set-theoreticMereology}, the $\of$ relation when augmented with the singleton operator becomes inter-definable with the membership relation $\in$. In our previous article~\cite{HamkinsKikuchi2016:Set-theoreticMereology}, we had argued that the decidability of set-theoretic mereology, the pure theory of $\of$, is an important part of the explanation why the $\of$-only form of mereology has not provided a robust foundation of mathematics, and an essentially similar argument applies to class-theoretic mereology, since $\of$ for classes is the theory of an infinite atomic Boolean algebra, again a decidable theory. We take the main results of this article, that there is essentially only one countable model of mereology that arises, to further buttress this argument. Meanwhile, by adopting the singleton operator, Lewis side-steps both of these criticisms, for mereology with the singleton operator is fully bi-interpretable with membership-based set and class theory, if at the cost of being less mereological.

\bibliographystyle{alpha}
\bibliography{HamkinsBiblio,MathBiblio,WebPosts}

\end{document}